\newtheorem{theorem}{Theorem}[section]
\newtheorem{proposition}[theorem]{Proposition}
\theoremstyle{remark}
\newtheorem{remark}[theorem]{Remark}
\newtheorem{question}[theorem]{Question}
\theoremstyle{definition}
\newtheorem{definition}[theorem]{Definition}
\newcommand{\finsum}[3]{
\underset{#1=#2}{\overset{#3}\sum}}
\newcommand{\dint}{
\displaystyle\int}
\newcommand{\dsum}{
\displaystyle\sum}
\newcommand{\inflim}[1]{
\underset{#1\to\infty}\lim}
\newcommand{\zsum}[1]{
\underset{#1\in\Z}\dsum}
\newcommand{\nsum}[1]{
\underset{#1\in\N}\dsum}
\newcommand{\R}{
\mathbb{R}}
\newcommand{\N}{
\mathbb{N}}
\newcommand{\Z}{
\mathbb{Z}}
\newcommand{\F}{
\mathscr{F}}
\newcommand{\supp}{
\textnormal{supp}}
\newcommand{\sinc}{
\textnormal{sinc}}
\newcommand{\bracket}[1]{
\left\langle#1\right\rangle}
\newcommand{\I}{
\mathscr{I}}
\newcommand{\phica}{
\widehat{\phi_{\alpha,c}}}
\newcommand{\Lachat}{
\widehat{L_{\alpha,c}}}
 \newcommand{\eps}{
 \varepsilon}
\begin{document}

%
%
\title{\bf\vspace{-39pt} Stability and Robustness of RBF Interpolation}

%
%

\author{Jean-Luc Bouchot\thanks{J.-L.B. was partially supported by the European Research
Council through the grant StG 258926 . It was finalized while J.-L.B. was staying at the Hausdorff Institute of Mathematics attending the Hausdorff Trimester Program on Mathematics of Signal Processing.} \\ \small Chair for Mathematics C (Analysis), RWTH Aachen University, Pontdriesch 10 \\
\small 52062 Aachen, Germany \\ \small bouchot@mathc.rwth-aachen.de\\
\\
Keaton Hamm \thanks{K.H. would like to especially thank the organizers of the 2015 SampTA conference where the collaboration that lead to this article began.}\\ \small Department of Mathematics, Vanderbilt University, 1326 Stevenson Center \\
\small Nashville, TX 37240, USA  \\ \small keaton.hamm@vanderbilt.edu}

%
%
\date{}

%
%
\maketitle

%
%
\markboth{\footnotesize \rm \hfill J.-L. BOUCHOT AND K. HAMM \hfill}
{\footnotesize \rm \hfill STABLE RBF INTERPOLATION\hfill}

%
%

\begin{abstract}
This article considers how some methods of uniform and nonuniform interpolation by translates of radial basis functions -- focusing on the case of the so-called general multiquadrics -- perform in the presence of certain types of noise.  These techniques provide some new avenues for interpolation on bounded domains by using fast Fourier transform methods to approximate cardinal functions associated with the RBF.
\vspace{5mm} \\
\noindent {\it Key words and phrases} : Radial Basis Functions, Nonuniform Sampling, Paley--Wiener Functions, Cardinal Functions
\vspace{3mm}\\
\noindent {\it 2000 AMS Mathematics Subject Classification} 41A25, 41A30, 42B08
\end{abstract}

%
%

\section{Introduction}

The classical sampling problem may be asked in two parts: first, for a given class of signals, does it suffice to know the samples, or values, of a signal at a given discrete set of points in order to recover the signal in some manner?  Second, how might the signals be recovered, and moreover, how might it be done in a computationally efficient way?  There are many theoretical and practical answers to this problem in various settings, and perhaps the most fundamental result is the classical Whittaker--Kotelnikov--Shannon sampling theorem \cite{Shannon}, which states that $L_2(\R)$ functions whose Fourier transform is supported on $[-\pi,\pi]$, for example, may be recovered in $L_2$ and uniformly via 
$$f(x) = \sum_{j\in\Z}f(j)\dfrac{\sin(\pi (x-j))}{\pi(x-j)}.$$
While Whittaker \cite{Whittaker} saw the series above as a {\em cardinal interpolation series}, i.e. evaluating the right-hand side at $k\in\Z$ produces $f(k)$, it was later shown that the convergence was uniform for bandlimited signals.  

The drawback of this sampling formula for practical considerations is that the series is difficult to approximate well by truncation since the cardinal sine function $\sinc(x):=\sin(\pi x)/(\pi x)$ decays slowly (like $|x|^{-1}$).  There is an abundance of literature tracing back to this fundamental theorem, and correspondingly many techniques to get around the slow decay of sinc.  One such method is oversampling, which can be costly in practice.  Another method intimately related to the analysis here is what I. J. Schoenberg, the father of spline theory, terms {\em summability methods} (see \cite{Schoenberg} and references therein).  Specifically, one attempts to replace sinc in the series above by another function which decays more rapidly, nonetheless requiring that the new series is close to the original signal in whatever way one wants to measure (e.g. in $L_2$ or uniformly).

Some study of summability methods using cardinal functions formed from translates of a single {\em radial basis function} (RBF) -- one which satisfies $\phi(x)=\phi(|x|)$-- has been made  \cite{Baxter,Buhmann,BuhmannBook,HammLedford,HammLedford2,HMNW,LedfordCardinal,RS4,Siva}.  Cardinal functions are those which satisfy the interpolatory condition $L(k)=\delta_{0,k}$, $k\in\Z$, of which sinc is the prototypical example.  Such cardinal functions fashioned from radial basis functions have a special form in the Fourier transform domain as discussed in the sequel.  

Building on these results, there are many techniques for sampling at nonuniform sets in $\R^d$.  Of course, the analysis is typically much simpler in one dimension, whereas many of the techniques that are currently known in higher dimensions rely on the geometry of the points in $\R^d$ in a nontrivial way.  Even the first part of the classical sampling question leaves some deep open questions in this area and has seen links with many interesting realms of mathematics including space-tiling, convex geometry, basis theory, and abstract harmonic analysis.  Of interest to this work are those nonuniform sampling methods which use RBFs \cite{BSS,HammJAT,HammZonotopes,LedfordScattered,schsiv}.  For a survey of some of these themes using multiquadrics consult \cite{HammSampTA}, of which this article is a continuation.

The primary concern of this article is twofold: to analyze what happens to various interpolation schemes involving RBFs for either bandlimited or time-limited signals in the presence of noise, and also to give some indication of the computational feasibility and methodology for performing the sampling scheme. We consider two kinds of noise and the effect they have on the sampling and reconstruction of certain classes of signals, with the primary method being that of sampling via interpolation.  Consequently, this allows us to make some comparisons with the traditional literature on RBF interpolation on compact domains.  The error estimates for noisy interpolation also allow for a stability result for interpolation on closed intervals, essentially stating that up to a constant, it suffices to consider interpolation at uniform points on the interval.  For the sake of ease, we present the results in one dimension, and discuss which ones lend themselves to multivariate analogues in the final section.

The rest of the paper is laid out as follows: we begin with a section of basic notions, facts, and recall some previous results on multiquadric interpolation of bandlimited signals.  Section \ref{SECNoise} contains new results on how the interpolation methods from Section \ref{SECBasic} behave in the presence of noise.  Then, in Section \ref{SECRates}, approximation rates in terms of the spacing of the samples in one dimension and the effect of noise on them are considered.  Section \ref{SECComputations} discusses how our method compares with classical RBF interpolation theory for compactly supported functions, and we end with a brief discussion of extensions in Section \ref{SECExtensions}.

\section{Background Material}\label{SECBasic}

If $\Omega\subset\R$ is a set of positive Lebesgue measure, then let $L_p(\Omega)$ be the typical Banach space of $p$--integrable (or essentially bounded in the case $p=\infty$) Lebesgue measurable functions on $\Omega$ with its usual norm.  If no set is specified, it is to be assumed that $\Omega=\R$.  Similarly, $\ell_p(I)$ are the usual sequence spaces of $p$--summable sequences indexed by a set $I$, and if no set is specified, we mean $\ell_p(\Z)$.  Let $C_0(\R)$ be the space of continuous functions on $\R$ which vanish at infinity.

For $k\in\N$, let $W_p^k(\Omega)$ be the Sobolev space of $L_p(\Omega)$ functions whose derivatives of order up to $k$ are all in $L_p(\Omega)$.  
The seminorm on the Sobolev space may be defined by
$$|g|_{W_p^k(\Omega)}:= \left(\dint_\Omega|g^{(k)}(x)|^pdx\right)^\frac{1}{p} = \|g^{(k)}\|_{L_p(\Omega)},$$
and the following is a norm on $W_p^k$: $\|g\|_{W_p^k(\Omega)}:=\|g\|_{L_p(\Omega)}+|g|_{W_p^k(\Omega)}.$  Again, if no set is specified, we refer to $W_p^k(\R)$.

For a function $f\in L_1$, define its Fourier transform via
$$\widehat{f}(\xi):=\dint_{\R}f(x)e^{-ix\xi}dx.$$  Thus under suitable conditions (for example, if $f$ is continuous and $\widehat{f}\in L_1$) the following inversion formula holds: $f(x) = (\widehat{f})^\vee(x) = (2\pi)^{-1}\int_{\R}\widehat{f}(\xi)e^{i\xi x}d\xi.$  The Fourier transform can be uniquely extended to a linear isomorphism of $L_2$ onto itself, and under this normalization, the Parseval/Plancherel Identity states that $2\pi\|f\|_{L_2}=\|\widehat{f}\|_{L_2}.$  

For a parameter $\sigma>0$, define the Paley--Wiener space of bandlimited functions (with band-size $\sigma)$ via
$PW_\sigma:=\{f\in L_2(\R): \widehat{f}=0 \textnormal{ a.e. outside } [-\sigma,\sigma]\}$.  The Paley--Wiener Theorem states that an equivalent definition of the latter is the space of entire functions of exponential type $\sigma>0$ whose restriction to $\R$ is in $L_2$.  As all Paley--Wiener spaces are isometrically isomorphic, we typically restrict ourselves to the canonical space $PW_\pi$; however all of the results mentioned here may be dilated to a space with different band-size.

The interpolation scheme considered in the sequel will use the following ideas for point distributions in $\R$, whose definitions are taken from \cite{yo}.
\begin{definition}\label{DEFCIS}
\begin{enumerate}
    \item[(i)] A sequence $(x_n)_{n\in\N}\subset\R$ is a {\em complete interpolating sequence} (CIS) for $PW_\pi$ provided for every $a\in\ell_2(\N)$, there exists a unique $f\in PW_\pi$ such that $f(x_n)=a_n$, $n\in\N$.
    \item[(ii)] A sequence $(f_n)_{n\in\N}$ in a Hilbert space $\mathcal{H}$ is a {\em Riesz basis} for $\mathcal{H}$ provided $(f_n)$ is complete and there exist $C_1,C_2>0$ such that the following inequality holds for all $a\in\ell_2(\N)$:
    \begin{equation}\label{EQFrameBounds}C_1\|a\|_{\ell_2}^2\leq\left\|\sum_{n\in\N}a_nf_n\right\|_{\mathcal{H}}^2\leq C_2\|a\|_{\ell_2}^2.\end{equation}
\end{enumerate}
\end{definition}

For Paley--Wiener spaces, complete interpolating sequences are equivalent to Riesz bases of exponentials in the corresponding $L_2$ space in the Fourier domain via the following theorem.

\begin{theorem}[\cite{yo}, Theorem 9, p. 143]
$(x_n)_{n\in\N}\subset\R$ is a CIS for $PW_\pi$ if and only if $\left(e^{-ix_n(\cdot)}\right)_{n\in\N}$ is a Riesz basis for $L_2[-\pi,\pi]$.
\end{theorem}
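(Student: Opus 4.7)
The plan is to carry everything across the Fourier transform. I would start by noting that $T:PW_\pi\to L_2[-\pi,\pi]$ defined by $Tf:=\widehat{f}$ is, by the Paley--Wiener theorem and Plancherel, a bounded linear isomorphism (isometric up to the factor $\sqrt{2\pi}$). The key identity is that for each $f\in PW_\pi$, since $\widehat{f}\in L_2[-\pi,\pi]\subset L_1[-\pi,\pi]$, Fourier inversion gives pointwise
$$f(x_n)=\frac{1}{2\pi}\int_{-\pi}^{\pi}\widehat{f}(\xi)\,e^{ix_n\xi}\,d\xi=\frac{1}{2\pi}\bigl\langle Tf,\,e^{-ix_n(\cdot)}\bigr\rangle_{L_2[-\pi,\pi]},$$
so point evaluation at $x_n$ on $PW_\pi$ corresponds to the moment functional against $e^{-ix_n(\cdot)}$ on $L_2[-\pi,\pi]$.

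For the forward direction, I would assume $(x_n)$ is a CIS, so the sampling map $S:PW_\pi\to\ell_2(\N)$, $Sf:=(f(x_n))_n$, is a bijection. It is automatically bounded by a closed-graph argument, since $L_2$ convergence in $PW_\pi$ forces pointwise convergence (through the reproducing kernel), so the bounded inverse theorem upgrades $S$ to a bounded linear isomorphism. Factoring $S=(2\pi)^{-1}\,M\circ T$ with $Mg:=\bigl(\langle g,e^{-ix_n(\cdot)}\rangle\bigr)_n$ transfers the same conclusion to $M:L_2[-\pi,\pi]\to\ell_2(\N)$. Setting $g_n:=M^{-1}(\delta_n)$ for $(\delta_n)_n$ the canonical basis of $\ell_2(\N)$ produces a sequence $(g_n)$ which is biorthogonal to $(e^{-ix_n(\cdot)})_n$ and is itself a Riesz basis of $L_2[-\pi,\pi]$ (being the image of an orthonormal basis under the bounded isomorphism $M^{-1}$); a standard duality argument then closes this direction.

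For the converse, I would take $(e^{-ix_n(\cdot)})_n$ to be a Riesz basis with biorthogonal dual $(g_n)$, itself a Riesz basis. Given $a\in\ell_2(\N)$, the series $g:=\sum_n 2\pi\,a_n\,g_n$ converges in $L_2[-\pi,\pi]$ with $\|g\|_{L_2}\asymp\|a\|_{\ell_2}$, and $f:=T^{-1}g\in PW_\pi$ then satisfies $f(x_n)=(2\pi)^{-1}\langle g,e^{-ix_n(\cdot)}\rangle=a_n$ by biorthogonality; uniqueness of $f$ is inherited from the uniqueness of the Riesz expansion of $g$.

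The main obstacle I expect is the duality step: passing from ``$M$ is a bounded invertible operator'' to ``$(e^{-ix_n(\cdot)})_n$ is a Riesz basis'' uses the nontrivial fact that the sequence biorthogonal to a Riesz basis in a Hilbert space is itself a Riesz basis (equivalently, that the two-sided Riesz bounds transfer to the dual system). Once that piece of basis theory is in hand, the rest of the proof is a direct translation of the CIS definition across the Plancherel isometry combined with the bounded inverse theorem.
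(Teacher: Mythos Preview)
The paper does not supply its own proof of this theorem; it is simply quoted from Young's monograph \cite{yo}. Your argument is correct and is essentially the classical one found there: transport the interpolation problem on $PW_\pi$ to a moment problem in $L_2[-\pi,\pi]$ via the Plancherel isometry, invoke the closed graph and bounded inverse theorems to make the sampling/moment map a bounded isomorphism, and then translate this into the Riesz basis condition.

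One minor streamlining of your forward direction: once you know the analysis operator $M:L_2[-\pi,\pi]\to\ell_2$, $Mg=(\langle g,e^{-ix_n(\cdot)}\rangle)_n$, is a bounded isomorphism, you need not pass through the biorthogonal system and the duality fact you flag as an obstacle. The adjoint $M^*:\ell_2\to L_2[-\pi,\pi]$ is given by $M^*a=\sum_n a_n\,e^{-ix_n(\cdot)}$ and is automatically a bounded isomorphism as well; surjectivity of $M^*$ gives completeness of $(e^{-ix_n(\cdot)})_n$, and the two-sided bound $\|M^*a\|_{L_2}\asymp\|a\|_{\ell_2}$ is exactly \eqref{EQFrameBounds}. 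This bypasses the ``dual of a Riesz basis is a Riesz basis'' step entirely.
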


For subsequent use, we catalog here some facts related to Riesz bases of exponentials.  First, it bears noting that in dimension 1, such bases are abundant by the following classical result.
\begin{theorem}[Kadec's $1/4$--Theorem, \cite{Kadec}]\label{THMKadecs}
If $(x_j)_{j\in\Z}\subset\R$ satisfies $$\underset{j\in\Z}\sup\;|x_j-j|<\frac{1}{4},$$ then $(x_j)$ is a CIS for $PW_\pi$.  Moreover, the bound of $1/4$ is sharp.
\end{theorem}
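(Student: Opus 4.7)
The plan is to route through the equivalence in the preceding theorem and prove that the exponential system $(e^{-ix_j\xi})_{j\in\Z}$ is a Riesz basis for $L_2[-\pi,\pi]$ by treating it as a perturbation of the orthogonal basis $(e^{-ij\xi})_{j\in\Z}$. The standard stability principle, usually attributed to Paley--Wiener, asserts that if some $\lambda\in[0,1)$ exists such that, for every finitely supported complex sequence $(c_j)$,
$$\left\|\sum_{j\in\Z}c_j\bigl(e^{-ix_j\xi}-e^{-ij\xi}\bigr)\right\|_{L_2[-\pi,\pi]} \;\leq\; \lambda\left\|\sum_{j\in\Z}c_j e^{-ij\xi}\right\|_{L_2[-\pi,\pi]},$$
then $(e^{-ix_j\xi})$ is automatically a Riesz basis too. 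Because $(e^{-ij\xi}/\sqrt{2\pi})$ is orthonormal on $L_2[-\pi,\pi]$, the right-hand side equals $\lambda\sqrt{2\pi}\,\|(c_j)\|_{\ell_2}$, so the task reduces to estimating the left-hand side by this quantity with $\lambda<1$.

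Setting $\delta_j := x_j-j$ and $L:=\sup_j|\delta_j|<1/4$, one has $e^{-ix_j\xi}-e^{-ij\xi}=e^{-ij\xi}(e^{-i\delta_j\xi}-1)$, and the real/imaginary split yields
$$e^{-i\delta_j\xi}-1 = -\bigl(1-\cos(\delta_j\xi)\bigr) - i\sin(\delta_j\xi).$$
I would then expand $1-\cos(\delta_j\xi)$ and $\sin(\delta_j\xi)$ on $[-\pi,\pi]$ with respect to Kadec's carefully chosen orthonormal system, namely $\{1/\sqrt{2\pi}\}\cup\{\cos(k\xi)/\sqrt{\pi}:k\geq 1\}\cup\{\sin((k-\tfrac{1}{2})\xi)/\sqrt{\pi}:k\geq 1\}$, whose orthogonality on $[-\pi,\pi]$ and completeness are classical. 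The cosine part absorbs the (even) first summand and the half-integer sine part absorbs the (odd) second summand; the resulting Fourier coefficients are closed-form expressions in $\sin(\pi\delta_j)$ and $\cos(\pi\delta_j)$ times rational factors like $1/(\delta_j^2-k^2)$ and $1/(\delta_j^2-(k-\tfrac{1}{2})^2)$, whose square-summability in $k$ is elementary.

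Substituting these expansions into the left-hand side, pulling the $e^{-ij\xi}$ factor through, and invoking Bessel--Parseval for the two orthogonal subsystems (with careful bookkeeping for the joint sum over $j$ and $k$) gives, after uniformizing the coefficient bounds in $|\delta_j|\leq L$,
$$\left\|\sum_{j\in\Z}c_j\bigl(e^{-ix_j\xi}-e^{-ij\xi}\bigr)\right\|_{L_2[-\pi,\pi]} \;\leq\; \bigl[(1-\cos(\pi L))+\sin(\pi L)\bigr]\sqrt{2\pi}\,\|(c_j)\|_{\ell_2}.$$
The function $\lambda(L):=1-\cos(\pi L)+\sin(\pi L)$ satisfies $\lambda(0)=0$, is strictly increasing on $[0,1/2]$, and obeys $\lambda(1/4)=1$, so the perturbation hypothesis holds exactly when $L<1/4$; this simultaneously gives the CIS conclusion and witnesses the sharpness of the constant, the latter realized at $L=1/4$ by Levinson's sequence $x_j=j+\tfrac{1}{4}\sgn(j)$.

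The chief obstacle is the middle step: the orthogonal system must be chosen so that the cosine and sine perturbations decouple and each contributes at most a simple trigonometric quantity. The half-integer sine system is exactly what achieves this, since it is orthogonal to both the constant and the integer cosines on $[-\pi,\pi]$; without that decoupling, cross terms would produce a strictly worse estimate than $(1-\cos(\pi L))+\sin(\pi L)$ and the $1/4$ threshold would not emerge. Once the decoupling is set up, the rest is uniform-in-$\delta$ control of the $\ell_2(k)$ norms of the explicit Fourier coefficients, which is straightforward.
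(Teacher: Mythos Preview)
The paper does not prove this theorem; it is quoted from the literature (Kadec's original paper) as background and carries no proof in the text. There is therefore nothing in the paper to compare your argument against.

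That said, your outline is the classical Kadec proof and is correct in structure: reduce to the Paley--Wiener stability criterion for Riesz bases, split $e^{-i\delta_j\xi}-1$ into its even and odd parts, expand these in the orthonormal system $\{1/\sqrt{2\pi}\}\cup\{\cos(k\xi)/\sqrt{\pi}\}_{k\ge1}\cup\{\sin((k-\tfrac12)\xi)/\sqrt{\pi}\}_{k\ge1}$, and use orthogonality to obtain the bound $\lambda(L)=1-\cos(\pi L)+\sin(\pi L)$. One small caution on the sharpness claim: the fact that $\lambda(1/4)=1$ only shows that \emph{this particular estimate} breaks down at $L=1/4$, not that the theorem itself fails there. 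Sharpness genuinely requires the Levinson example $x_j=j+\tfrac14\,\sgn(j)$ (which you do mention), together with an independent argument that this sequence is \emph{not} a CIS; your phrasing ``simultaneously \ldots\ witnesses the sharpness'' slightly conflates the two.
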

There are higher dimensional analogues of Kadec's theorem, for example, see \cite{Bailey1,Bailey2,SunZhou}.  Having a sufficient condition, we also note that a necessary condition for $(x_j)$ to be a CIS is that it is separated, i.e. $\inf_{j\neq k}|x_k-x_j|>0$.  For a complete characterization by Pavlov using Muckenhoupt's $A_p$ condition in terms of zeros of so-called sine-type entire functions, see \cite{Pavlov}.

There are also some important notions of stability of complete interpolating sequences which will be required. 

\begin{theorem}\label{THMPerturbedCIS}

\begin{enumerate}
\item[(i)]\cite[Theorem 1]{Young_Perturb}  If $(x_j)_{j\in\N}$ is a CIS for $PW_\pi$, then there exists a positive constant $L$ such that if $|y_j-x_j|\leq L$ for every $j\in\N$, then $(y_j)$ is a CIS for $PW_\pi$.
\item[(ii)]\cite[Lemma 3.1]{PakShin} If $(x_j)_{j\in\N}$ is a CIS for $PW_\pi$, and $(y_j)_{j\in\N}$ is such that $y_j\neq x_j$ for only finitely many $j\in\N$, and $y_j \neq y_i$ for any $i \neq j$, then $(y_j)$ is also a CIS for $PW_\pi$.
\end{enumerate}
\end{theorem}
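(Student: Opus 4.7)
The plan is to pass to the Fourier-domain characterization furnished by the preceding theorem: $(x_n)$ is a CIS for $PW_\pi$ if and only if $(e^{-ix_n(\cdot)})_{n\in\N}$ is a Riesz basis for $L_2[-\pi,\pi]$. Both parts then become perturbation statements about Riesz bases of exponentials.

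For part (i), I would invoke the classical Paley--Wiener stability principle for Riesz bases: if $(f_n)$ is a Riesz basis for a Hilbert space $\mathcal{H}$ and $(g_n)\subset\mathcal{H}$ satisfies $\bigl\|\sum c_n(f_n-g_n)\bigr\|_{\mathcal{H}}\le\lambda\bigl\|\sum c_n f_n\bigr\|_{\mathcal{H}}$ for some $\lambda<1$ and all finite $(c_n)$, then $(g_n)$ is also a Riesz basis. Applying this with $f_n=e^{-ix_n(\cdot)}$ and $g_n=e^{-iy_n(\cdot)}$, the elementary pointwise estimate
\begin{equation*}
|e^{-ix_n\xi}-e^{-iy_n\xi}|\;\le\;|x_n-y_n|\,|\xi|\;\le\;\pi L, \qquad \xi\in[-\pi,\pi],
\end{equation*}
combined with the upper Riesz bound of $(e^{-ix_n(\cdot)})$ (via a Schur-type estimate of the kernel of $I-T$, where $T$ is the putative transfer operator $f_n\mapsto g_n$), yields a perturbation factor that tends to $0$ as $L\to 0$, so that $L$ small enough produces $\lambda<1$. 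The main quantitative obstacle is converting the pointwise bound into a usable operator-norm inequality; this is exactly what Kadec's theorem handles sharply in the case $x_j=j$, but here only existence of some $L>0$ is needed, which general Riesz-basis perturbation theory supplies without sharp constants.

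For part (ii), let $F:=\{j\in\N:x_j\ne y_j\}$, finite by hypothesis. The Riesz basis $(e^{-ix_j(\cdot)})_{j\in\N}$ is complete and minimal, so removing $\{e^{-ix_j(\cdot)}\}_{j\in F}$ leaves a Riesz sequence whose closed linear span has codimension exactly $|F|$ in $L_2[-\pi,\pi]$. The assumption $y_j\ne y_i$ for $i\ne j$, combined with $y_i=x_i$ for $i\notin F$, forces also $y_j\ne x_i$ whenever $j\in F$ and $i\notin F$, so all replacement exponentials $\{e^{-iy_j(\cdot)}\}_{j\in F}$ are distinct from one another and from the retained tail. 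Pairing these replacements against the restriction of the biorthogonal dual system of $(e^{-ix_j(\cdot)})$ to the index set $F$ produces an $|F|\times|F|$ matrix whose nonsingularity follows from the classical linear independence of distinct complex exponentials; this suffices to upgrade the enlarged family to a Riesz basis via a finite-rank argument, and the previous theorem then returns the CIS property of $(y_j)$. The main obstacle is precisely this finite matrix nonsingularity check, which can in turn be recast as showing that $\{e^{-iy_j(\cdot)}\}_{j\in F}$ has trivial intersection with the closed span of $(e^{-ix_j(\cdot)})_{j\notin F}$ modulo that span, a routine consequence of distinctness.
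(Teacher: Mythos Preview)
The paper does not prove this theorem; both parts are quoted with citation (\cite{Young_Perturb}, \cite{PakShin}) and used as black boxes, so there is no in-paper argument to compare your sketch against. Your overall strategy for each part---pass to the exponential Riesz basis via the preceding theorem and invoke, respectively, Paley--Wiener-type perturbation and a finite-rank replacement lemma---is the standard one and matches what those references do.

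Two places in your sketch are thinner than they look. In (i), the pointwise bound $|e^{-ix_n\xi}-e^{-iy_n\xi}|\le\pi L$ does not by itself control the operator norm of $c\mapsto\sum_n c_n(e^{-ix_n\cdot}-e^{-iy_n\cdot})$ on $\ell_2$; a ``Schur-type estimate'' is not the mechanism that actually works here. Young instead factors $e^{-iy_n\xi}=e^{-ix_n\xi}e^{-i(y_n-x_n)\xi}$, expands the second factor in a fixed orthonormal system on $[-\pi,\pi]$, and uses that each resulting multiplication operator is bounded on the Riesz-basis coefficient space. In (ii), the nonsingularity of your $|F|\times|F|$ matrix is not a ``routine consequence of distinctness'': distinct exponentials are linearly independent in $L_2[-\pi,\pi]$, but what you need is independence \emph{modulo} the closed span of the infinite tail $(e^{-ix_n\cdot})_{n\notin F}$, equivalently that no nonzero $H\in PW_\pi$ vanishes on $\{x_n:n\notin F\}\cup\{y_j:j\in F\}$. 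That step genuinely uses the entire-function structure of $PW_\pi$ (e.g.\ the generating function with simple zeros at the $x_n$ and a degree count for the residual polynomial factor), not bare distinctness of the $y_j$.
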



\subsection{The Interpolation Scheme}\label{SECScheme}

The primary concern of this paper is to analyze a scheme which samples a smooth function via interpolation from a shift-invariant space of certain radial basis functions.  To wit, consider the following general problem: given a function $f$ with a certain order of smoothness (e.g. in $PW_\pi$ or $W_p^k(\R)$), a separated sequence $X:=(x_j)_{j\in\Z}\subset\R$, and a radial basis function $\phi:\R\to\R$ such that $\phi(x)=\phi(|x|)$, find an interpolating function of the form
\begin{equation}\label{EQInterpForm}\I_\phi f(x) = \dsum_{j\in\Z}a_j\phi(x-x_j),\quad x\in\R,\end{equation} which satisfies
$$\I_\phi f(x_k) = f(x_k),\quad k\in\Z.$$
If the reliance on the sequence $X$ needs to be clear, the notation $\I_\phi^X f$ will be used.

As this article continues the theme of the SampTA 2015 article by the second author \cite{HammSampTA}, the sequel will primarily emphasize interpolation using the so-called {\em general multiquadrics} as kernels, though we stress that the results here are part of a more general phenomenon and have analogues for a variety of different kernels, which we briefly mention in Section~\ref{SECExtensions}.  The general multiquadrics are defined using two parameters via $$\phi_{\alpha,c}(x):=(|x|^2+c^2)^\alpha.$$  To avoid notational encumbrance, we adopt the convention $\I_{\alpha,c}:=\I_{\phi_{\alpha,c}}$.  

Let us first note that if $c>0$, $\alpha<-1/2$ and $X$ is a CIS for $PW_\pi$, then for any $f\in PW_\pi$, a multiquadric interpolant $\I_{\alpha,c}f$ as in \eqref{EQInterpForm} exists \cite{LedfordScattered}.  Furthermore, the interpolant is unique (i.e. the sequence $(a_j)$ in \eqref{EQInterpForm} is uniquely determined as the solution to the equation $\mathcal{M}a=y$, where $y_j=f(x_j)$, and $\mathcal{M}:=(\phi(x_j-x_k))_{j,k\in\Z}$), and $\I_{\alpha,c}$ is a bounded linear operator from $PW_\pi\to C_0\cap L_2(\R)$ \cite[Proposition 1]{LedfordScattered}.


\subsection{Bandlimited Recovery}\label{SECRecovery}

Here we recall one of the main recovery results for bandlimited functions using the interpolation method set out in the previous section.  

\begin{theorem}[cf. \cite{HammSampTA}\label{THM1DRecovery}, Theorem IV.1]\label{THMScatteredRecovery}
Let $\alpha<-1/2$ and let $X$ be a complete interpolating sequence for $PW_\pi$.  If $f\in PW_\pi$, then $\I_{\alpha,c}^Xf\in L_2(\R)$ and
$$\inflim{c}\|\I_{\alpha,c}^Xf-f\|_{L_2(\R)} = 0,$$
and
$$\inflim{c}|\I_{\alpha,c}^Xf(x)-f(x)| = 0$$
uniformly on $\R$.

Moreover, if $f\in PW_\sigma$ for some $\sigma<\pi$,
\begin{equation}\label{EQOversamplingRate}
\|\I_{\alpha,c}^Xf-f\|_{L_2(\R)}\leq C e^{-c(\pi-\sigma)}\|f\|_{L_2(\R)},
\end{equation}
where the constant $C$ depends on $\alpha$ and $X$, but not on $c$.
\end{theorem}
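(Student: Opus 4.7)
The plan is to pass to the Fourier domain and exploit two features in tandem: the explicit, well-behaved Fourier transform of $\phi_{\alpha,c}$, and the Riesz basis of exponentials furnished by the CIS hypothesis. For $\alpha<-1/2$, the function $\phi_{\alpha,c}$ lies in $L_1(\R)$, and its Fourier transform is a positive, smooth, radial function of the form (up to constants) $c^{\alpha+1/2}|\xi|^{-\alpha-1/2} K_{-\alpha-1/2}(c|\xi|)$, where $K_\nu$ is a modified Bessel function; the crucial feature is its exponential decay $\widehat{\phi_{\alpha,c}}(\xi) \sim e^{-c|\xi|}$ as $c|\xi|\to\infty$. Meanwhile, the CIS assumption gives a Riesz basis $(e^{-ix_j\cdot})_{j\in\Z}$ of $L_2[-\pi,\pi]$, so the coefficient-to-symbol map $(a_j)\mapsto g(\xi):=\sum_j a_j e^{-ix_j \xi}$ is an isomorphism $\ell_2\to L_2[-\pi,\pi]$ obeying \eqref{EQFrameBounds}. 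Formally one has $\widehat{\I_{\alpha,c}^X f}(\xi) = \widehat{\phi_{\alpha,c}}(\xi)\,g(\xi)$, and the rapid decay of $\widehat{\phi_{\alpha,c}}$ outside a bounded neighborhood of the origin lets one make this rigorous in $L_2(\R)$; the interpolation conditions then uniquely determine the $a_j$, with existence coming from \cite{LedfordScattered}.

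To establish $L_2$ convergence, I would split the error via Plancherel into an \emph{in-band} piece on $[-\pi,\pi]$ and an \emph{out-of-band} piece:
\begin{equation*}
2\pi\|f-\I_{\alpha,c}^X f\|_{L_2(\R)}^2 = \int_{-\pi}^\pi |\widehat{f}(\xi)-\widehat{\phi_{\alpha,c}}(\xi)g(\xi)|^2 d\xi + \int_{|\xi|>\pi}|\widehat{\phi_{\alpha,c}}(\xi)g(\xi)|^2 d\xi.
\end{equation*}
The out-of-band piece is controlled by partitioning $\{|\xi|>\pi\}$ into translates of $[-\pi,\pi]$, applying the Riesz upper bound on each shifted copy to pass from $\|g\|_{L_2}$ to $\|a\|_{\ell_2}$, and summing a geometric series in $e^{-c|\xi|}$, yielding a bound of order $e^{-c\pi}\|a\|_{\ell_2}$. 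The in-band piece is handled by rewriting the interpolation equation in Fourier form to show that $\widehat{\phi_{\alpha,c}}g$ converges to $\widehat{f}$ on $[-\pi,\pi]$ as $c\to\infty$, where the Riesz lower bound makes this quantitative. Uniform convergence then follows from the inversion estimate $\|f-\I_{\alpha,c}^Xf\|_\infty \leq (2\pi)^{-1}\|\widehat{f}-\widehat{\I_{\alpha,c}^Xf}\|_{L_1}$, combined with Cauchy--Schwarz against the square root of $\widehat{\phi_{\alpha,c}}$ and the previously derived $L_2$ bound.

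Finally, for $f\in PW_\sigma$ with $\sigma<\pi$, the estimate \eqref{EQOversamplingRate} refines the above: on $[-\sigma,\sigma]$ the interpolation equation closes exactly (since $\widehat{f}=\widehat{\phi_{\alpha,c}}g$ is forced there), and on $\sigma<|\xi|\leq\pi$ the Fourier transform $\widehat{f}$ vanishes so only the $\widehat{\phi_{\alpha,c}}g$ term contributes; its exponential decay, now benefiting from $|\xi|\geq\sigma$, produces the factor $e^{-c(\pi-\sigma)}$ after resumming over translates and invoking the Riesz upper bound once more. The main obstacle I expect is obtaining a uniform-in-$c$ bound on $\|a\|_{\ell_2}$; without this, the exponential decay of $\widehat{\phi_{\alpha,c}}$ alone cannot drive the out-of-band piece to zero and the constant $C$ in \eqref{EQOversamplingRate} cannot be shown independent of $c$. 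Securing this bound is precisely where the full strength of the Riesz basis property (rather than mere completeness) is required, and it is the technical heart of the argument in \cite{HammSampTA}.
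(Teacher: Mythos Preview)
The paper does not prove this theorem at all: it is stated in Section~\ref{SECRecovery} as a recalled background result, attributed to \cite{HammSampTA} (and, implicitly, to \cite{LedfordScattered} for existence and the qualitative convergence). There is therefore no ``paper's own proof'' to compare against; what you have written is a sketch of how the result is obtained in the cited sources.

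Your overall strategy---pass to the Fourier side, use the exponential decay of $\widehat{\phi_{\alpha,c}}$, and exploit the Riesz basis inequalities \eqref{EQFrameBounds} to control $g(\xi)=\sum_j a_j e^{-ix_j\xi}$ on translates of $[-\pi,\pi]$---is indeed the approach taken in \cite{LedfordScattered,HammSampTA,HammZonotopes}, and your identification of the uniform-in-$c$ bound on $\|a\|_{\ell_2}$ as the crux is correct. However, one step in your sketch is a genuine gap: the assertion that for $f\in PW_\sigma$ ``the interpolation equation closes exactly on $[-\sigma,\sigma]$ (since $\widehat f=\widehat{\phi_{\alpha,c}}\,g$ is forced there)'' is not true as stated. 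The interpolation conditions $\I_{\alpha,c}^Xf(x_k)=f(x_k)$ are pointwise in the spatial domain; in the Fourier domain they translate, via the Riesz basis, into an identity between the \emph{biorthogonal coefficients} of $\widehat f$ and of a folded version of $\widehat{\phi_{\alpha,c}}\,g$ on $[-\pi,\pi]$, not into the pointwise equality $\widehat f=\widehat{\phi_{\alpha,c}}\,g$ on any subinterval. (That pointwise identity holds only in the cardinal case $X=\Z$, where Poisson summation is available and yields \eqref{EQFundamentalFT}.) In the scattered setting the argument in the cited references instead controls the in-band discrepancy through the Riesz lower bound applied to the difference of biorthogonal expansions, and the rate $e^{-c(\pi-\sigma)}$ emerges from bounding the tail contributions that enter this folded identity, not from an exact cancellation on $[-\sigma,\sigma]$.
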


While the first part of this theorem only says something about the asymptotic behaviour of the interpolants for functions whose Fourier transform is fully supported in the band of the Paley-Wiener space, we nonetheless obtain exponential convergence in terms of the shape parameter, $c$, of the multiquadric when oversampling, corresponding to the same notions in classical sampling theory.  Currently, no approximation rates in terms of $c$ are known when $\widehat{f}$ has support on the full interval $[-\pi,\pi]$.

For a multivariate analogue of Theorem \ref{THMScatteredRecovery}, see \cite[Theorem IV.2]{HammSampTA}; since its statement is somewhat technical and would take us out of the scope considered here, we omit it.

\section{Interpolation in the Presence of Noise}\label{SECNoise}

Given the preliminaries above, we turn our attention to considering how the interpolation scheme behaves in the presence of noise.  There are two main kinds of noise that will be considered: noisy data, and so-called {\em jitter error}.

\subsection{Stability under perturbation of sample points}

Jitter error corresponds to the case when the sample points $X$ are perturbed.  That is, instead of sampling at $X:=(x_j)_{j\in\Z}$, we sample at $\tilde{X}:=(\tilde{x_j})_{j\in\Z}$ with $\tilde{x_j}=x_j+\eps_j$ for some bounded perturbation $(\eps_j)\in \ell_\infty(\Z)$.  Physically, this may correspond to non-ideal sensors which have some error in the timing of the sampling.

Notice that it follows from Theorem \ref{THMScatteredRecovery} that the recovery results therein are independent under perturbations of the sample points at least as long as the perturbed points still form a complete interpolating sequence.  So if $\tilde{X}=X+\eps$ is a complete interpolating sequence for the Paley--Wiener space, we have
$$\inflim{c}\|I_{\alpha,c}^{\tilde{X}}f-f\|_{L_2} =\inflim{c}\|I_{\alpha,c}^Xf-f\|_{L_2} = 0.$$
Of course, the rate of convergence may differ, though it is difficult to relate how. As a particular example of this, the following proposition is evident.

\begin{proposition}\label{PROPJitterKadecs}
Suppose that $X$ satisfies Kadec's 1/4--Theorem, and $\underset{j\in\Z}\sup|x_j-j|= L<1/4$. Then if $\|(\eps_j)_j\|_\infty <  1/4-L$, $\tilde{X}$ given by $\tilde{x_j}=x_j+\eps_j$ is a CIS for $PW_\pi$.  In particular, $\inflim{c}\I_{\alpha,c}^{\tilde{X}}f=f$ in $L_2$ and uniformly on $\R$ for any $f\in PW_\pi$.
\end{proposition}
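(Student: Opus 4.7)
The plan is to observe that this proposition is essentially an immediate application of Kadec's $1/4$--Theorem (Theorem \ref{THMKadecs}) combined with the general convergence result in Theorem \ref{THMScatteredRecovery}, so the bulk of the work amounts to verifying that the perturbed points $\tilde{X}$ still satisfy the Kadec hypothesis.

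First, I would estimate $|\tilde{x}_j - j|$ by the triangle inequality: since $\tilde{x}_j = x_j + \varepsilon_j$,
$$|\tilde{x}_j - j| \leq |x_j - j| + |\varepsilon_j| \leq L + \|(\varepsilon_j)\|_\infty.$$
Taking the supremum over $j \in \Z$, the hypothesis $\|(\varepsilon_j)\|_\infty < 1/4 - L$ yields
$$\sup_{j\in\Z}|\tilde{x}_j - j| < L + (1/4 - L) = 1/4,$$
so $\tilde{X}$ satisfies the strict Kadec condition. Theorem \ref{THMKadecs} then immediately gives that $\tilde{X}$ is a complete interpolating sequence for $PW_\pi$.

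Having established that $\tilde X$ is a CIS, the second assertion is simply Theorem \ref{THMScatteredRecovery} applied with the sequence $\tilde X$ in place of $X$: for every $f \in PW_\pi$, both $\|\I_{\alpha,c}^{\tilde X}f - f\|_{L_2(\R)} \to 0$ and $\I_{\alpha,c}^{\tilde X}f(x) \to f(x)$ uniformly on $\R$ as $c \to \infty$. There is no real obstacle here; the only subtle point to flag is that the rate of convergence (encoded in the constant $C$ of \eqref{EQOversamplingRate} in the oversampled case) may depend on $\tilde X$ and hence on the perturbation $(\varepsilon_j)$, but the qualitative convergence statement of the proposition is insensitive to this.
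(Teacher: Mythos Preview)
Your proof is correct and follows exactly the same approach as the paper: verify via the triangle inequality that $\tilde{X}$ still satisfies Kadec's condition, then invoke Theorem~\ref{THMScatteredRecovery}. The paper's own proof is simply the one-line remark that $\tilde{X}$ still satisfies the condition of Kadec's Theorem and an application of Theorem~\ref{THMScatteredRecovery}, so you have merely spelled out the details.
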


\begin{proof}
Notice that $\tilde{X}$ still satisfies the condition of Kadec's Theorem and apply Theorem \ref{THMScatteredRecovery}.
\end{proof}
 
Similarly, if $\eps_j=0$ for all but finitely many $j$, Theorem \ref{THMPerturbedCIS}(ii) implies that $\tilde{X}$ is again a CIS.  Theorem \ref{THMPerturbedCIS}(i) also implies that for any CIS $X$, there exists a constant $L$ such that if $\|\eps\|_{\ell_\infty}\leq L$, then $\tilde{X}$ is again a CIS, and consequently the conclusion of Proposition \ref{PROPJitterKadecs} holds in both of these cases.

However, one drawback is that this $L$ may be very small.  One can see this, for example, because the 1/4-Theorem is sharp, so if $X$ was perturbed from the integer lattice arbitrarily close to 1/4, a small perturbation might fail.  

Nonetheless, we may make some estimate on $L$ based not on the magnitude of $\|\eps\|_{\ell_\infty}$, but on the so-called {\em frame bounds} of the basis $(e^{-ix_j(\cdot)})_j$.  Note that it follows from \eqref{EQFrameBounds} and Plancherel's Identity that there are constants $0<A\leq B<\infty$ (the frame bounds), such that for any $f\in PW_\pi$,
\begin{equation}\label{EQframe}
A\|f\|_{L_2(\R)}^2\leq\zsum{j}|f(x_j)|^2\leq B\|f\|_{L_2(\R)}^2.
\end{equation}
The following can be found in \cite{DuffinSchaeffer}:
\begin{proposition}\label{PROPPerturbedFrameBounds}
Suppose that $(e^{-ix_j(\cdot)})_{j\in\Z}$ is a Riesz basis for $L_2[-\pi,\pi]$ with frame bounds $A,B>0$. Then if $0<L<\pi^{-1}\ln\left(\sqrt{\frac{A}{B}}+1\right)$, and $\tilde{x_j}=x_j+\eps_j$, with $\|(\eps_j)_j\|_{\ell_\infty}\leq L$, $(e^{-i\tilde{x_j}(\cdot)})_{j\in\Z}$ is a Riesz basis for $L_2[-\pi,\pi]$ with frame bounds $A(1-\sqrt{C})^2$ and $B(1+\sqrt{C})^2$, where $C=\frac{B}{A}(e^{\pi L}-1)^2$.
\end{proposition}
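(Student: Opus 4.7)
The plan is to follow the Paley--Wiener style stability recipe: first bound the difference system $(e^{-ix_j(\cdot)}-e^{-i\tilde x_j(\cdot)})_{j\in\Z}$ in operator norm relative to the unperturbed Riesz basis, then read off the new frame bounds by two applications of the triangle inequality, and finally deduce completeness from a Neumann-series invertibility argument.

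The heart of the proof is the estimate, valid for every $c\in\ell_2(\Z)$,
$$\left\|\sum_{j\in\Z}c_j\bigl(e^{-ix_j t}-e^{-i\tilde x_j t}\bigr)\right\|_{L_2[-\pi,\pi]}\leq \sqrt{C}\left\|\sum_{j\in\Z}c_j e^{-ix_j t}\right\|_{L_2[-\pi,\pi]}.$$
To obtain it, I would expand $1-e^{-i\eps_j t}=-\sum_{n\geq 1}(-i\eps_j t)^n/n!$, interchange the order of summation, and pull the scalar factor $(-it)^n/n!$ out of the $j$-sum. Taking the $L_2[-\pi,\pi]$-norm, using $|t|\leq\pi$ to extract $\pi^n$, and then applying the upper frame bound $B$ to each inner sum (the coefficients $c_j\eps_j^n$ satisfy $\|(c_j\eps_j^n)_j\|_{\ell_2}\leq L^n\|c\|_{\ell_2}$ since $|\eps_j|\leq L$) yields the majorant $\sqrt{B}(e^{\pi L}-1)\|c\|_{\ell_2}$. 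The lower frame bound $A$ from \eqref{EQframe} then converts $\|c\|_{\ell_2}$ into $A^{-1/2}\|\sum_j c_j e^{-ix_j t}\|_{L_2[-\pi,\pi]}$, producing exactly the constant $\sqrt{C}=\sqrt{B/A}(e^{\pi L}-1)$. The hypothesis $L<\pi^{-1}\ln(\sqrt{A/B}+1)$ is precisely what makes $\sqrt{C}<1$.

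From this estimate, the direct and reverse triangle inequalities applied to $\sum c_j e^{-i\tilde x_j t}=\sum c_j e^{-ix_j t}-\sum c_j(e^{-ix_j t}-e^{-i\tilde x_j t})$, together with \eqref{EQframe}, produce the advertised bounds $A(1-\sqrt{C})^2$ and $B(1+\sqrt{C})^2$. For completeness of $(e^{-i\tilde x_j(\cdot)})_{j\in\Z}$, I would define a bounded linear operator $T$ on $L_2[-\pi,\pi]$ by $T\bigl(\sum c_j e^{-ix_j t}\bigr):=\sum c_j\bigl(e^{-ix_j t}-e^{-i\tilde x_j t}\bigr)$, which is well-defined because the unperturbed exponentials are a Riesz basis and hence furnish unique expansions; the main estimate gives $\|T\|\leq\sqrt{C}<1$, so $I-T$ is an isomorphism of $L_2[-\pi,\pi]$, and since $(I-T)\bigl(\sum c_j e^{-ix_j t}\bigr)=\sum c_j e^{-i\tilde x_j t}$, the perturbed exponentials span the entire space. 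The one technical point requiring care is the Fubini-type interchange of the $n$-sum with $L_2$-integration and with the $j$-sum in a uniform way in $c$, but the absolute convergence of $\sum_n(\pi L)^n/n!$ makes this a routine verification rather than a genuine obstacle.
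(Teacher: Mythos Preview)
Your argument is correct and is precisely the classical Paley--Wiener/Duffin--Schaeffer stability argument. Note, however, that the paper does not supply its own proof of this proposition: it is simply quoted from \cite{DuffinSchaeffer} (see the sentence immediately preceding the statement). So there is nothing in the paper to compare against beyond the citation, and what you have written is essentially the proof one finds in that reference or in Young's book \cite{yo}.

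One small remark on bookkeeping: you invoke \eqref{EQframe} for the lower bound, but \eqref{EQframe} is the sampling inequality in $PW_\pi$, not the Riesz-basis inequality \eqref{EQFrameBounds} for the exponentials in $L_2[-\pi,\pi]$; it is the latter that you actually use when you write $\|\sum_j d_j e^{-ix_j t}\|_{L_2}\le \sqrt{B}\,\|d\|_{\ell_2}$ and $\|c\|_{\ell_2}\le A^{-1/2}\|\sum_j c_j e^{-ix_j t}\|_{L_2}$. This is harmless, since for a Riesz basis the optimal frame bounds and the optimal Riesz bounds coincide (both equal the extreme singular values of the synthesis operator), so the constant $C=\tfrac{B}{A}(e^{\pi L}-1)^2$ is unaffected. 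It would just be cleaner to cite \eqref{EQFrameBounds} directly.
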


Thus combining Proposition \ref{PROPPerturbedFrameBounds} and Theorem \ref{THMScatteredRecovery} gives a bound on the magnitude of jitter error allowed for a given CIS.

\subsection{Robustness to noisy samples}

Next, consider what happens if, instead of sampling $f(x_j)$ exactly, we actually measure $y_j = f(x_j)+\delta_j$.  For now, assume that $(\delta_j)\in\ell_2$, and $\|(\delta_j)_j\|_{\ell_2}\leq\delta$.  In this case, the noise is added to the signal, and could appear as random background noise, or in some cases deterministic (possibly adversarial) noise.  There are many ways to model such noise, but our focus here will be on that which is square-summable.

Given noisy samples $(y_j)$ as above, let $\widetilde{\I_{\alpha,c}^X}f$ be the interpolant of the data $y_j$ satisfying $\widetilde{\I_{\alpha,c}^X}f(x_k)=y_k$, $k\in\Z$.  Note that $(y_j)\in\ell_2$ by the condition on the noise sequence $(\delta_j)$.  Consequently, on account of Definition \ref{DEFCIS}, there is a unique $g\in PW_\pi$ such that $g(x_j)=y_j$.  Thus, by uniqueness of the interpolant, there is a unique $g\in PW_\pi$ such that  $\widetilde{\I_{\alpha,c}^X}f=\I_{\alpha,c}^Xg$, and the following holds.

\begin{theorem}\label{THMnoise}
Let $X$ be a CIS for $PW_\pi$ and let $y_j=f(x_j)+\delta_j$ for some $f\in PW_\pi$ with $\|(\delta_j)\|_{\ell_2}\leq\delta$.  Then
$$\|\widetilde{\I_{\alpha,c}^X}f-f\|_{L_2}\leq \dfrac{\delta}{\sqrt{A}} + o(1),\quad c\to\infty,$$ where $A$ is as in \eqref{EQframe}.  Moreover, if $f\in PW_\sigma$, $\sigma<\pi$, the $o(1)$ term may be taken to be a constant multiple of $e^{-c(\pi-\sigma)}\|f\|_{L_2}$.
\end{theorem}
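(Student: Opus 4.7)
The plan is to use the CIS property of $X$ to interpolate the noisy samples by a genuine Paley--Wiener function, thereby reducing the analysis to the known bandlimited recovery from Theorem~\ref{THMScatteredRecovery} plus a frame-bound estimate for the noise component.

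First I would verify that the noisy data $(y_j)=(f(x_j)+\delta_j)$ lies in $\ell_2(\Z)$: the samples $(f(x_j))_j$ do by the upper bound in \eqref{EQframe} together with $f\in PW_\pi$, and $(\delta_j)_j$ does by hypothesis. Since $X$ is a CIS for $PW_\pi$, there is then a unique $g\in PW_\pi$ with $g(x_j)=y_j$ for every $j$. By uniqueness of the multiquadric interpolant with prescribed $\ell_2$ data on $X$ (cited in Section~\ref{SECScheme}), this forces $\widetilde{\I_{\alpha,c}^X}f=\I_{\alpha,c}^X g$, as already noted in the paragraph just before the statement. So the task reduces to comparing $\I_{\alpha,c}^X g$ to $f$.

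The natural splitting is
\[
\widetilde{\I_{\alpha,c}^X}f - f \;=\; \left(\I_{\alpha,c}^X g - g\right) + (g-f).
\]
Set $h:=g-f\in PW_\pi$; it satisfies $h(x_j)=\delta_j$. The lower frame bound in \eqref{EQframe} applied to $h$ gives $A\|h\|_{L_2}^2\leq \sum_j|\delta_j|^2\leq\delta^2$, whence $\|g-f\|_{L_2}\leq \delta/\sqrt{A}$. For the first summand, since $g\in PW_\pi$, the qualitative part of Theorem~\ref{THMScatteredRecovery} yields $\|\I_{\alpha,c}^X g - g\|_{L_2} = o(1)$ as $c\to\infty$. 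Combining these bounds by the triangle inequality yields the main claim.

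For the ``moreover'', I would exploit the linearity of $g\mapsto \I_{\alpha,c}^X g$ in the data to write $\I_{\alpha,c}^X g - g = (\I_{\alpha,c}^X f - f) + (\I_{\alpha,c}^X h - h)$. When $f\in PW_\sigma$ with $\sigma<\pi$, the first piece is controlled by \eqref{EQOversamplingRate}, giving the exponential bound $Ce^{-c(\pi-\sigma)}\|f\|_{L_2}$. The main obstacle lies in the second piece: the noise-induced function $h$ sits only in $PW_\pi$ and, generically, not in any strictly smaller Paley--Wiener space, so \eqref{EQOversamplingRate} is unavailable for it. One must either accept a residual $o(1)$ term alongside the exponential one, or — the cleaner route — establish a $c$-uniform operator bound $\|\I_{\alpha,c}^X h\|_{L_2}\leq K\|h\|_{L_2}$ for $h\in PW_\pi$. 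Combined with $\|h\|_{L_2}\leq \delta/\sqrt{A}$, such a stability estimate would let one absorb the noise-interpolation error directly into the $\delta/\sqrt{A}$ term, at the cost of a modified multiplicative constant, and produce exactly the bound stated.
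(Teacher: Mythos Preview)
Your argument for the main inequality is correct and coincides with the paper's: introduce $g\in PW_\pi$ interpolating the noisy data $(y_j)$, split $\widetilde{\I_{\alpha,c}^X}f-f=(\I_{\alpha,c}^X g-g)+(g-f)$, bound the first piece by the qualitative part of Theorem~\ref{THMScatteredRecovery}, and bound the second via the lower frame inequality applied to $g-f\in PW_\pi$.

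For the ``moreover'' clause you actually go further than the paper. The paper dispatches it in a single sentence, writing that it ``follows directly on account of Theorem~\ref{THMScatteredRecovery},'' without separating off the noise component. Your decomposition $\I_{\alpha,c}^X g-g=(\I_{\alpha,c}^X f-f)+(\I_{\alpha,c}^X h-h)$ and your observation that $h=g-f$ generically lies only in $PW_\pi$, so that \eqref{EQOversamplingRate} is not directly available for it, isolate a subtlety the paper does not make explicit. The two remedies you sketch (accepting a residual $o(1)$ from the $h$-term, or invoking a $c$-uniform operator bound on $\I_{\alpha,c}^X$ to fold that term into the $\delta/\sqrt{A}$ constant) are reasonable routes; the paper's brief citation effectively reads the exponential rate as governing the $f$-dependent part of the error, which is exactly your first piece $\I_{\alpha,c}^X f-f$.
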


\begin{proof}
Let $g\in PW_\pi$ be the function described above. Then we have
$$\|\widetilde{\I_{\alpha,c}^X}f-f\|_{L_2} = \|\I_{\alpha,c}^Xg-f\|_{L_2}\leq \|\I_{\alpha,c}^Xg-g\|_{L_2}+\|g-f\|_{L_2} =: N_1+N_2.$$

It follows from Theorem \ref{THMScatteredRecovery} that $N_1 = o(1),\; c\to\infty$.  Applying \eqref{EQframe}, we estimate $N_2$ as follows:
$$\|g-f\|_{L_2}\leq A^{-\frac{1}{2}}\|(g(x_j)-f(x_j))_j\|_{\ell_2} = A^{-\frac{1}{2}}\|(\delta_j)_j\|_{\ell_2}\leq A^{-\frac{1}{2}}\delta,$$ which concludes the proof of the first statement.  The moreover statement follows directly on account of Theorem \ref{THMScatteredRecovery}.
\end{proof}


\section{Approximation Rates Based on Spacing}\label{SECRates}

As discussed in the previous section, the approximation rates in terms of the shape parameter, $c$, of the multiquadric are maintained in the presence of noise (hence the error of approximation is dominated by the $\ell_2$ norm of the noise as in Theorem \ref{THMnoise}).  But another type of approximation rate has been considered.  Specifically, we fix a CIS, $X$, and consider interpolation at $hX$ for $0<h\leq1$, and we tune the shape parameter of the multiquadric to reflect the dilation.  More precisely, we interpolate from the space 
$$\left\{\zsum{j}a_j\phi_{\alpha,1}(\cdot-hx_j) = \zsum{j}a_j\left(|\cdot-hx_j|^2+1\right)^\alpha: (a_j)_{j\in\Z}\subset\R\right\},$$
which may be closed in, say, the topology of uniform convergence on compact sets. 
In the uniform interpolation setting, when $X=\Z$, these interpolants have a special structure, which will be discussed later. To emphasize the distinction (and the reliance of the shape parameter on $h$) we write this new interpolant in a different manner as $I_\alpha^{hX}f$.  One may show that the relation to the original interpolant is 
\begin{equation}\label{EQIRelation}
I_\alpha^{hX}f(x) = \frac{1}{h}\I_{\alpha,h^{-1}}f^h\left(\frac{x}{h}\right),
\end{equation}
where $f^h(x):=hf(hx)$.  When $\alpha$ is fixed, we drop the subscript to ease the notation.

To begin the analysis of the effect of noise on this process, let us recall the following.

\begin{theorem}[\cite{HammJAT}, Theorem 3.4]\label{THMmeshApproximationRate}
Suppose that $\alpha<-1/2$, $k\in\N$, $0<h\leq1$, and $X$ is a CIS for $PW_\pi$.  Then there exists a constant $C$ independent of $h$ such that for every $f\in W_2^k(\R)$,
\begin{equation}\label{EQhrate}
\|I^{hX}f-f\|_{L_2(\R)}\leq Ch^k|f|_{W_2^k(\R)}.
\end{equation}
\end{theorem}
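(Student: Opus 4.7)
The plan is to exploit the dilation identity \eqref{EQIRelation} to reduce the Sobolev estimate to an oversampling estimate for a rescaled, nearly bandlimited function. A direct change of variables in \eqref{EQIRelation} gives
\[
\|I^{hX}f-f\|_{L_2(\R)}= h^{-1/2}\|\I_{\alpha,h^{-1}}f^h - f^h\|_{L_2(\R)},
\]
so it suffices to produce the estimate $\|\I_{\alpha,h^{-1}}f^h - f^h\|_{L_2}\leq C h^{k+1/2}|f|_{W_2^k}$. The payoff is that the shape parameter $c=h^{-1}$ is now large, placing us within the regime of Theorem~\ref{THMScatteredRecovery}.

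Next, I would fix some $\sigma\in(0,\pi)$ (say, $\sigma=\pi/2$) and split $f^h=g_h+r_h$, where $g_h$ denotes the $L_2$-projection of $f^h$ onto $PW_\sigma$. Using the scaling law $\widehat{f^h}(\xi)=\widehat{f}(\xi/h)$ together with the trivial bound $1\leq (h|\eta|/\sigma)^{2k}$ on $\{|\eta|\geq \sigma/h\}$, one obtains the tail estimate
\[
\|r_h\|_{L_2}\leq \sigma^{-k}h^{k+1/2}|f|_{W_2^k},
\]
which is already of the desired order. By linearity of the interpolant,
\[
\I_{\alpha,h^{-1}}f^h - f^h = \bigl(\I_{\alpha,h^{-1}}g_h - g_h\bigr) + \bigl(\I_{\alpha,h^{-1}}r_h - r_h\bigr).
\]
The first bracket is controlled by the ``moreover'' clause of Theorem~\ref{THMScatteredRecovery} applied with $c=h^{-1}$: since $g_h\in PW_\sigma$ with $\sigma<\pi$, the interpolation error is of order $e^{-(\pi-\sigma)/h}\|g_h\|_{L_2}$, beating any polynomial in $h$. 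For the second bracket, beyond the direct bound $\|r_h\|_{L_2}$ just obtained on the $-r_h$ term, one also needs $\|\I_{\alpha,h^{-1}}r_h\|_{L_2}\leq C\|r_h\|_{L_2}$ with $C$ independent of $h$. Assembling the estimates and multiplying by $h^{-1/2}$ then yields the claimed bound $Ch^k|f|_{W_2^k}$.

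The hard part is this last uniform-in-$h$ boundedness of the interpolant on the non-bandlimited remainder. The natural route has three substeps: (i) a Plancherel--Polya-type sampling inequality $\sum_j |r_h(x_j)|^2\leq C\|r_h\|_{W_2^1}^2$, exploiting the separation of $X$ and Sobolev embedding (valid when $k\geq 1$); (ii) replacing $r_h$ by the unique $PW_\pi$ function $\widetilde r_h$ interpolating $(r_h(x_j))_j$, which by Definition~\ref{DEFCIS}(i) and \eqref{EQframe} satisfies $\|\widetilde r_h\|_{L_2}\leq A^{-1/2}\|(r_h(x_j))\|_{\ell_2}$; and (iii) a uniform-in-$c$ $L_2$ bound on $\I_{\alpha,c}\widetilde r_h$, using the boundedness of $\I_{\alpha,c}\colon PW_\pi\to L_2$ recorded at the end of Section~\ref{SECScheme}. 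Step (iii) is the most delicate, since the operator norm must be shown not to blow up as $c=h^{-1}\to\infty$; this requires a careful Fourier-side analysis of the multiquadric interpolant, whose symbol becomes increasingly concentrated inside $[-\pi,\pi]$ as $c$ grows.
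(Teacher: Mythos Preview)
The paper does not prove this theorem at all: it is quoted verbatim from \cite{HammJAT} (Theorem~3.4 there), so there is no in-paper argument to compare against. What can be said is how your sketch lines up with the machinery the paper imports from \cite{HammJAT}, namely Theorems~\ref{THMinterpSbyB} and~\ref{THMuniformhbound}, which together are the engine behind the cited result.

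Your outline is sound in spirit and correctly isolates the hard step (the uniform-in-$c$ boundedness of $\I_{\alpha,c}$, which is precisely Theorem~\ref{THMuniformhbound} with $k=0$). However, as written it proves only the weaker estimate $\|I^{hX}f-f\|_{L_2}\leq Ch^k\|f\|_{W_2^k}$, with the \emph{full} Sobolev norm on the right, not the seminorm $|f|_{W_2^k}$. The leak occurs in the bandlimited piece: the oversampling bound of Theorem~\ref{THMScatteredRecovery} gives $\|\I_{\alpha,h^{-1}}g_h-g_h\|_{L_2}\leq Ce^{-(\pi-\sigma)/h}\|g_h\|_{L_2}$, and after undoing the dilation this contributes $Ce^{-(\pi-\sigma)/h}\|f\|_{L_2}$. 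The $h$-dependence is harmless, but there is no inequality $\|f\|_{L_2}\leq C|f|_{W_2^k}$ on $W_2^k(\R)$ (take $f_n=g(\cdot/n)$ for a fixed bump $g$), so you cannot absorb this into $Ch^k|f|_{W_2^k}$ uniformly in $f$.

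The remedy is exactly the content of Theorem~\ref{THMinterpSbyB}: instead of projecting $f^h$ onto a \emph{fixed} band $PW_\sigma$, replace $f$ by the bandlimited interpolant $F\in PW_{\pi/h}$ that already matches $f$ at the nodes $hx_j$. Then $I^{hX}f=I^{hX}F$, the deviation $\|F-f\|_{L_2}$ is controlled by $Ch^k|f|_{W_2^k}$, and crucially $|F|_{W_2^k}\leq C|f|_{W_2^k}$, so every subsequent estimate stays in terms of the seminorm. After dilation one is left with bounding $\|\I_{\alpha,c}G-G\|_{L_2}$ for $G=F^h\in PW_\pi$ by $C|G|_{W_2^k}$ uniformly in $c$, which is where the Fourier-side analysis you allude to, together with Theorem~\ref{THMuniformhbound}, comes in. This is the route taken in \cite{HammJAT}; your fixed-$\sigma$ splitting is a natural first attempt but loses exactly the information (seminorm control of the low-frequency part) that the interpolatory projection of Theorem~\ref{THMinterpSbyB} is designed to retain.
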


\begin{remark} Note again that the estimate in Theorem \ref{THMmeshApproximationRate} is invariant under perturbing the CIS in a certain manner.  Specifically, if $X$ is a CIS for $PW_\pi$, and so is $Y$, then \eqref{EQhrate} holds for both interpolants albeit with a different constant $C$.  Moreover, one finds via the triangle inequality that
\begin{equation}\label{EQDifferenceBound}
\|I^{hX}f-I^{hY}f\|_{L_2}\leq (C_X+C_Y)h^k|f|_{W_2^k}.
\end{equation}
\end{remark}

\subsection{Noise in nonuniform interpolation}

To discuss the question of reconstruction from noisy data in the setting described in this section, it is pertinent to recall a theorem on the stability of interpolating a given Sobolev function via a bandlimited one as an intermediate step to analyzing the interpolant.  

\begin{theorem}[\cite{HammJAT}, Theorem 3.1]\label{THMinterpSbyB}
Let $k\in\N$, $h>0$, and let $X$ be a fixed CIS for $PW_\pi$.  There exists a constant $C=C_{k,X}$, independent of $h$, such that for every $f\in W_2^k(\R)$, there exists a unique $F\in PW_\frac{\pi}{h}$ such that
$$F(hx_j) = f(hx_j),\quad j\in\Z,$$
$$|F|_{W_2^k}\leq C|f|_{W_2^k},$$
and
$$\|F-f\|_{L_2}\leq Ch^k|f|_{W_2^k}.$$
\end{theorem}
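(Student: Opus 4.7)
The plan is to construct $F$ as a sum $F = f_1 + F_2$, where $f_1$ is the orthogonal projection of $f$ onto $PW_{\pi/h}$ (i.e., $\widehat{f_1} = \widehat{f}\,\chi_{[-\pi/h,\pi/h]}$) and $F_2 \in PW_{\pi/h}$ is the correction forced by the interpolation conditions. Setting $\eta_j := f(hx_j) - f_1(hx_j)$, one first observes that a change of variables $y = hx$ transfers the CIS property of $X$ for $PW_\pi$ to a CIS property of $hX$ for $PW_{\pi/h}$, and the frame inequality \eqref{EQframe} rescales to $(A/h)\|G\|_{L_2}^2 \leq \sum_j |G(hx_j)|^2 \leq (B/h)\|G\|_{L_2}^2$ for every $G \in PW_{\pi/h}$. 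Consequently, once $(\eta_j) \in \ell_2$ is established, such an $F_2$ exists and is unique, $F := f_1 + F_2$ automatically satisfies $F(hx_j) = f(hx_j)$, and the uniqueness claimed in the theorem follows because any two candidate interpolants differ by an element of $PW_{\pi/h}$ vanishing on the CIS $hX$.

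The bulk of the work is in quantifying the three pieces to obtain rate $h^k$. For the high-frequency tail of $f$, the standard Fourier-side estimate gives, for $0 \leq j \leq k$, $\|(f - f_1)^{(j)}\|_{L_2}^2 = (2\pi)^{-1}\int_{|\xi| > \pi/h} |\xi|^{2j}|\widehat{f}(\xi)|^2 d\xi \leq (h/\pi)^{2(k - j)}|f|_{W_2^k}^2$, which yields both $\|f - f_1\|_{L_2} \leq (h/\pi)^k|f|_{W_2^k}$ and $|f_1|_{W_2^k} \leq |f|_{W_2^k}$. To control $\sum_j |\eta_j|^2$, I would invoke the Plancherel--Polya--type inequality $\sum_{y \in Y}|g(y)|^2 \leq (2/d)\|g\|_{L_2}^2 + 2d\|g'\|_{L_2}^2$, valid for any $g \in W_2^1$ and any separated sequence $Y$ of minimal gap $d$; this is proved by averaging $|g(y)|^2 \leq 2|g(u)|^2 + 2d\int_{I_y}|g'|^2$ over $u$ in an interval $I_y$ of length $d$ centred at $y$ and summing. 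Applied with $g = f - f_1$ and $d \gtrsim h$ (since $hX$ has separation $h \cdot \textnormal{sep}(X)$), together with the $j = 0$ and $j = 1$ cases of the tail estimate this yields $\sum_j |\eta_j|^2 \leq Ch^{2k - 1}|f|_{W_2^k}^2$.

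Feeding this into the rescaled lower frame bound gives $\|F_2\|_{L_2}^2 \leq (h/A)\sum_j |\eta_j|^2 \leq Ch^{2k}|f|_{W_2^k}^2$, and the triangle inequality closes the $L_2$ estimate: $\|F - f\|_{L_2} \leq \|f - f_1\|_{L_2} + \|F_2\|_{L_2} \leq C h^k |f|_{W_2^k}$. For the Sobolev seminorm, I would apply the $L_2$ Bernstein inequality to $F_2 \in PW_{\pi/h}$, giving $\|F_2^{(k)}\|_{L_2} \leq (\pi/h)^k\|F_2\|_{L_2} \leq C\pi^k|f|_{W_2^k}$, which combined with $|f_1|_{W_2^k} \leq |f|_{W_2^k}$ yields $|F|_{W_2^k} \leq C|f|_{W_2^k}$. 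The main subtlety I anticipate is the apparent $h^{1/2}$-loss in the Plancherel--Polya step, which naively produces only an $h^{k - 1/2}$ rate; this loss is exactly absorbed by the $h^{-1}$ degradation in the lower frame bound of $hX$ on $PW_{\pi/h}$ (equivalently, inverting sampling from the $h$-scaled CIS contributes an $h^{1/2}$ factor), and the two half-powers combine to give the sharp $h^k$ rate.
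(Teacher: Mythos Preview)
This theorem is not proved in the present paper; it is quoted verbatim from \cite{HammJAT} (Theorem~3.1 there) as a tool, so there is no in-paper proof to compare against. That said, your argument is correct and is exactly the natural route to the result: the low-pass/correction splitting $F=f_1+F_2$ with $\widehat{f_1}=\widehat{f}\chi_{[-\pi/h,\pi/h]}$, the Fourier-tail bounds $\|(f-f_1)^{(j)}\|_{L_2}\le (h/\pi)^{k-j}|f|_{W_2^k}$, the elementary $W_2^1$ sampling inequality on the $h$-separated set $hX$, the rescaled lower frame bound $(A/h)\|G\|_{L_2}^2\le\sum_j|G(hx_j)|^2$, and finally Bernstein for $|F_2|_{W_2^k}$, combine to give the stated conclusions with an $h$-independent constant. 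Your remark that the apparent $h^{1/2}$ loss from the sampling inequality is exactly compensated by the extra $h^{1/2}$ coming from inverting the rescaled frame bound is the key bookkeeping point, and you have it right. This is the approach used in \cite{schsiv} and carried over to the present kernels in \cite{HammJAT}, so your proposal coincides with the method behind the cited result.
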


Additionally, the interpolation operators are uniformly bounded in the Sobolev seminorm:

\begin{theorem}[\cite{HammJAT}, Theorem 3.3]\label{THMuniformhbound}
For each $k\geq0$, there is a constant $C$, independent of $h$, such that
$$|I^{hX}f|_{W_2^k}\leq C|f|_{W_2^k},\quad f\in W_2^k(\R).$$
\end{theorem}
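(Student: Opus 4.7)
The plan is to leverage Theorem \ref{THMinterpSbyB} together with the scaling identity \eqref{EQIRelation} to reduce the statement to a uniform Sobolev-seminorm bound for $\I_{\alpha,c}$ on bandlimited functions, and then treat this reduced bound via Fourier analysis.

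First, I would invoke Theorem \ref{THMinterpSbyB} to produce $F \in PW_{\pi/h}$ with $F(hx_j) = f(hx_j)$ for every $j$ and $|F|_{W_2^k} \leq C|f|_{W_2^k}$, where $C$ is independent of $h$. Since $I^{hX}$ depends only on the sample values $\{g(hx_j)\}_j$, we have $I^{hX}f = I^{hX}F$, and it suffices to bound $|I^{hX}F|_{W_2^k}$ by a constant multiple of $|F|_{W_2^k}$ uniformly in $h$.

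Second, via the scaling identity \eqref{EQIRelation}, writing $F^h(y) = hF(hy) \in PW_\pi$ and tracking the powers of $h$ through the $L_2$ norms of $k$-th derivatives on both sides, one shows $|I^{hX}F|_{W_2^k}/|F|_{W_2^k} = |\I_{\alpha,1/h}F^h|_{W_2^k}/|F^h|_{W_2^k}$. The problem thus reduces to the $h$-independent estimate $|\I_{\alpha,c}g|_{W_2^k} \leq C'|g|_{W_2^k}$ valid uniformly for $g \in PW_\pi$ and $c \geq 1$. In the uniform case $X = \Z$ this can be handled on the Fourier side via the representation $\widehat{\I_{\alpha,c}g}(\xi) = \widehat{L_{\alpha,c}}(\xi) G(\xi)$, where $G$ is the $2\pi$-periodization of $\widehat{g}$ and $\widehat{L_{\alpha,c}}(\xi) = \widehat{\phi_{\alpha,c}}(\xi)/\sum_n \widehat{\phi_{\alpha,c}}(\xi + 2\pi n)$; the Bessel--K asymptotics of $\widehat{\phi_{\alpha,c}}$ provide rapid decay of $\widehat{L_{\alpha,c}}$ outside $[-\pi,\pi]$ uniformly in $c \geq 1$. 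In the nonuniform case one uses the dual Riesz basis of $(e^{-ix_j\xi})_j$ in $L_2[-\pi,\pi]$ to obtain an analogous representation.

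The main obstacle is controlling the aliased contributions uniformly in $c \geq 1$: the weighted quantity $\sum_m |\eta + 2\pi m|^{2k} |\widehat{L_{\alpha,c}}(\eta + 2\pi m)|^2$ arising from Plancherel's identity must be dominated so that the seminorm comparison survives the integration against $|\widehat{g}|^2$ on $[-\pi,\pi]$. For large $c$ the aliased terms are exponentially small thanks to the Bessel--K decay, but for $c$ close to $1$ a quantitative finite bound is required; a splitting of the range $c \in [1,\infty)$ into a compact regime and an asymptotic regime, joined by continuity in $c$, produces a single uniform constant.
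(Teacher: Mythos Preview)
The paper does not prove this theorem: it is quoted verbatim from \cite{HammJAT} (Theorem~3.3 there) and used as a black box in the subsequent noise analysis. There is therefore no ``paper's own proof'' to compare your proposal against.

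That said, your outline is a reasonable reconstruction of how such a bound is typically obtained, and indeed mirrors the strategy in \cite{HammJAT}: pass to the bandlimited interpolant $F$ via Theorem~\ref{THMinterpSbyB}, use the scaling relation \eqref{EQIRelation} to trade the parameter $h$ for the shape parameter $c=1/h$, and then establish a $c$-uniform Sobolev-seminorm bound for $\I_{\alpha,c}^X$ on $PW_\pi$. Your scaling computation is correct, and the reduction to $c\ge 1$ is the right one.

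Where your sketch is thin is precisely where the work lies: the nonuniform case. The cardinal-function Fourier formula \eqref{EQFundamentalFT} is specific to $X=\Z$; for a general CIS the representation of $\widehat{\I_{\alpha,c}^X g}$ involves the dual Riesz basis and a non-diagonal action, and ``an analogous representation'' does not by itself yield the aliasing estimate you need. The compactness-plus-continuity patch you propose for $c$ near $1$ also requires knowing that $c\mapsto |\I_{\alpha,c}^X g|_{W_2^k}$ is continuous and finite for each fixed $c$, which is part of what must be proved. None of this is wrong in spirit, but a complete argument would need the quantitative Bessel-function and Riesz-basis estimates carried out in \cite{HammJAT} rather than invoked.
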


Now, suppose that we measure $y_j(h) = f(hx_j)+\delta_j(h)$ with $\underset{h}\sup\|(\delta_j(h))_j\|_{\ell_2}\leq\delta$,  and let $\widetilde{I^{hX}}f$ be the interpolant of $y_j(h)$. Then we have the following rate of approximation.

\begin{theorem}
Under the assumptions of Theorem \ref{THMinterpSbyB}, there is a constant $C=C_{k,X}$ such that, for every $f\in W_2^k(\R)$,
$$\|\widetilde{I^{hX}}f-f\|_{L_2}\leq Ch^k|f|_{W_2^k}+\dfrac{\delta}{\sqrt{A}},$$
where $A$ is the lower frame bound for $X$ given by \eqref{EQframe}.
\end{theorem}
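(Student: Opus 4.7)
The plan is to follow the strategy of Theorem \ref{THMnoise}, now adapted to the scaled setting in which the mesh parameter $h$ enters. The key observation is that $hX$ is a CIS for $PW_{\pi/h}$ by a simple dilation of $X$, so the noise sequence $(\delta_j(h))\in\ell_2$ uniquely determines a bandlimited function $H\in PW_{\pi/h}$ with $H(hx_j)=\delta_j(h)$. Linearity and uniqueness of the RBF interpolant then give $\widetilde{I^{hX}}f - I^{hX}f = I^{hX}H$, so I would split the error via the triangle inequality as
$$\|\widetilde{I^{hX}}f - f\|_{L_2} \leq \|I^{hX}f - f\|_{L_2} + \|I^{hX}H - H\|_{L_2} + \|H\|_{L_2}.$$
The first term is directly controlled by Theorem \ref{THMmeshApproximationRate}, yielding $Ch^k|f|_{W_2^k}$.

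For the third term, I would transfer \eqref{EQframe} from $X$ on $PW_\pi$ to $hX$ on $PW_{\pi/h}$ via the change of variable $\tilde{H}(y):=H(hy)$, which lies in $PW_\pi$ and satisfies $\|\tilde{H}\|_{L_2}^2 = h^{-1}\|H\|_{L_2}^2$; substitution into \eqref{EQframe} gives $(A/h)\|H\|_{L_2}^2 \leq \sum_{j\in\Z}|\delta_j(h)|^2 \leq \delta^2$, hence $\|H\|_{L_2}\leq \sqrt{h/A}\,\delta \leq \delta/\sqrt{A}$ since $h\leq 1$. For the middle term, I would apply Theorem \ref{THMmeshApproximationRate} to $H$ itself to obtain $\|I^{hX}H-H\|_{L_2} \leq Ch^k|H|_{W_2^k}$, and then invoke Bernstein's inequality for the bandlimited function $H$, namely $|H|_{W_2^k} = \|H^{(k)}\|_{L_2} \leq (\pi/h)^k\|H\|_{L_2}$. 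Combining the two yields $\|I^{hX}H - H\|_{L_2} \leq C\pi^k\,\delta/\sqrt{A}$. All noise-related constants thus depend only on $k$ and $X$, and can be absorbed into a single $C_{k,X}$ to produce the stated form.

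The main technical point -- and the step that deserves care -- is the cancellation of the Bernstein factor $(\pi/h)^k$ against the $h^k$ coming from the mesh approximation rate; this exact cancellation is what makes the noise contribution $h$-independent. Without it the noise term would blow up as $h\to 0$, contrary to the desired stability behavior. None of the other ingredients (existence of $H$, the dilated frame bound, the Bernstein estimate) poses a conceptual difficulty on its own, but the careful bookkeeping of how each factor of $h$ arises and cancels across the three pieces is the delicate part of the argument.
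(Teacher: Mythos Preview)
Your argument is correct, and it takes a genuinely different route from the paper. The paper introduces the bandlimited function $g\in PW_{\pi/h}$ interpolating the \emph{full} noisy data $y_j(h)$, splits as $\|I^{hX}g-I^{hX}f\|_{L_2}+\|I^{hX}f-f\|_{L_2}$, and then controls the first term by invoking the uniform $L_2$-boundedness of $I^{hX}$ (Theorem~\ref{THMuniformhbound} with $k=0$) together with the bandlimited interpolation result Theorem~\ref{THMinterpSbyB} to estimate $\|g-f\|_{L_2}$. You instead isolate the \emph{noise} as the bandlimited function $H\in PW_{\pi/h}$, write $I^{hX}H=(I^{hX}H-H)+H$, and handle the middle piece by applying Theorem~\ref{THMmeshApproximationRate} to $H$ and cancelling the resulting $h^k$ against the Bernstein factor $(\pi/h)^k$. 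Your approach is arguably more self-contained: it uses only Theorem~\ref{THMmeshApproximationRate} (twice) plus the elementary Bernstein inequality, and never needs Theorems~\ref{THMinterpSbyB} or~\ref{THMuniformhbound}. The paper's route, on the other hand, makes the dependence on the operator norm of $I^{hX}$ explicit and avoids your Bernstein cancellation step. In both proofs the noise term actually comes with a constant depending on $k,X$ in front of $\delta/\sqrt{A}$, so neither delivers the stated coefficient~$1$ literally; this is a cosmetic matter of absorbing constants.
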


\begin{proof}
The first key observation is that $hX$ is a CIS for $PW_\frac{\pi}{h}$, which can be seen because Riesz bases are preserved under bounded, invertible linear transformations.  Consequently, let $g\in PW_\frac{\pi}{h}$ be the unique function such that $g(hx_j)=y_j(h)$.  Then we have
$$\|\widetilde{I^{hX}}f-f\|_{L_2} = \|I^{hX}g-f\|_{L_2}\leq\|I^{hX}g-I^{hX}f\|_{L_2}+\|I^{hX}f-f\|_{L_2}=:N_1+N_2.$$

Theorem \ref{THMmeshApproximationRate} implies that $N_2\leq Ch^k|f|_{W_2^k}$.  Secondly, Theorem \ref{THMuniformhbound} with $k=0$ implies that
$$N_1\leq\|I^{hX}(g-f)\|_{L_2}\leq C\|g-f\|_{L_2}.$$
Let $F\in PW_\frac{\pi}{h}$ be the function given by Theorem \ref{THMinterpSbyB} such that $F(hx_j)=f(hx_j)$.  Then we have
$$\|g-f\|_2\leq\|g-F\|_2+\|F-f\|_2\leq\|g-F\|_2+Ch^k|f|_{W_2^k}.$$
From \eqref{EQframe}, $$\|g-F\|_2\leq A^{-\frac{1}{2}}\|(g(hx_j)-f(hx_j))_j\|_{\ell_2}=A^{-\frac{1}{2}}\|(\delta_j(h))_j\|_{\ell_2}\leq A^{-\frac{1}{2}}\delta.$$ Thus $N_1\leq Ch^k|f|_{W_2^k}+A^{-\frac{1}{2}}\delta$.  

Combining the estimates on $N_1$ and $N_2$ completes the proof.
\end{proof}

\subsection{Uniform Sampling}

In the uniform case (when $X$ is a lattice), this interpolation scheme has some special properties, including the possibility of using growing kernels such as multiquadrics with positive $\alpha$.  Additionally, the interpolants themselves have a simpler form as they lie in the span of shifts of a single {\em cardinal function} which behaves like the classical cardinal sine function.

Given a multiquadric, we form a cardinal function $L_{\alpha,c}$ satisfying $L_{\alpha,c}(j)=\delta_{0,j},\;j\in\Z$, which lies in the closure of the linear span of $(\phi_{\alpha,c}(\cdot-j))_{j\in\Z}$ (where convergence is uniform for sufficiently negative $\alpha$, and uniform on compact subsets for positive $\alpha$.  Then the multiquadric interpolant can be written as 
\begin{equation}\label{EQUniformInterpDef}\I_{\alpha,c}^{\Z}f(x):=\zsum{j}f(j)L_{\alpha,c}(x-j).\end{equation}

The cardinal function $L_{\alpha,c}$ can be defined by its Fourier transform:
\begin{equation}\label{EQFundamentalFT}
\Lachat(\xi):=\dfrac{\phica(\xi)}{\zsum{k}\phica(\xi+2\pi k)},\quad \xi\in\R\setminus\{0\}.
\end{equation}

Note that the series in \eqref{EQUniformInterpDef} is rather similar to the series in the classical Whittaker-Kotelnikov-Shannon sampling theorem if $f\in PW_\pi$, but with the sinc function replaced by the cardinal function associated with the general multiquadric.  This indeed was the observation made by Schoenberg, who instigated the study of cardinal spline interpolation.  The point was to apply a sort of summability method to the sinc series in the sampling theorem because the fact that $\sinc(x)=O(|x|^{-1})$ implies that it takes a rather large number of terms to well-approximate the series via truncation.  If the cardinal functions $L_{\alpha,c}$ decay faster than the sinc function, then the series in \eqref{EQUniformInterpDef} will be easier to approximate by truncation; of course, the question then is whether or not the cardinal interpolant $\I_{\alpha,c}^\Z f$ is close to $f$ (either in $L_2$ or uniformly depending on the kind of guarantees one desires).  For an in-depth study of the decay rates of the cardinal functions associated with general multiquadrics, see \cite{HammLedford}, especially Section 4 therein.  For a broad range of values of $\alpha$, $L_{\alpha,c}$ decays faster than $|x|^{-1}$.  We note also that for the cases $\alpha=\pm1/2$, such considerations were already made by Buhmann \cite{Buhmann} and Riemenschneider and Sivakumar \cite{RS1}.  Additionally, $L_p$ approximation rates of the same order as in Theorem \ref{THMmeshApproximationRate} for interpolation at $h\Z$ can be found in \cite{HammLedford2}.

\subsection{Interpolation of Compactly Supported Functions}

Let us now turn our attentions to some more practical considerations which may prove useful in applications.  Of course, everything from Theorem \ref{THMnoise} to Theorem \ref{THMmeshApproximationRate} holds whenever we take $X=\Z$, which is clearly a CIS for $PW_\pi$.  Let us consider for the moment what happens whenever we consider interpolation of univariate compactly supported Sobolev functions.  To wit, let $f\in W_2^k(\R)$ with $\supp(f)\subset[-1,1]$ (this choice of interval is arbitrary for ease of presentation, and can easily be dilated).  Then for $N\in\N$, the interpolant $I^{N^{-1}\Z}f$ is actually interpolating $f$ at the sequence $\{\frac{j}{N}\}_{j=-N}^N$, and has the following form by combining \eqref{EQIRelation} and \eqref{EQUniformInterpDef}:
\begin{equation}\label{EQcardinaldef}
I^{N^{-1}\Z}f(x) = \finsum{j}{-N}{N}f\left(\frac{j}{N}\right)L_{\alpha,c}(Nx-j).
\end{equation}

Consequently, Theorem \ref{THMmeshApproximationRate} shows that the approximation rate in this case has an upper bound in terms of the number of sample points.  Namely, if $X = \{\frac{j}{N}\}_{j=-N}^N$, then
\begin{equation}
 \|I^{N^{-1}\Z}f-f\|_{L_2}\leq CN^{-k}|f|_{W_2^k} = C|X|^{-k}|f|_{W_2^k}.
\end{equation}
Also, Theorem \ref{THMnoise} still holds with $h$ replaced by $|X|^{-1}$ as well.

But now, consider $N^{-1}X=\{\frac{x_j}{N}\}_{J=-N}^N$ to be an arbitrary set of distinct points in $[-1,1]$ (i.e. $x_j$ are arbitrary in $[-N,N]$).  Then let 
\begin{displaymath}
 \widetilde{x_k}:=\left\{ \begin{array}{ll}
                          k,  & |k|>N\\
                          x_k,  & |k|\leq N.\\
                         \end{array}\right.
\end{displaymath}

Evidently, $\widetilde{X}$ is a CIS for $PW_\pi$ on account of Theorem \ref{THMPerturbedCIS}(ii).  It follows from \eqref{EQDifferenceBound} that
\begin{align}\label{EQscatteredapprox}
\|I^{N^{-1}\widetilde{X}}f-f\|_{L_2} & \leq\|I^{N^{-1}\widetilde{X}}f-I^{N^{-1}\Z}f\|_{L_2}+\|I^{N^{-1}\Z}f-f\|_{L_2}\nonumber\\
& \leq(C_{\widetilde{X}}+2C_\Z)N^{-k}|f|_{W_2^k}.
\end{align}

The constant $C_{\widetilde{X}}$ depends on a few things: the minimum spacing of the sequence (i.e. $\inf_{j\neq k}|x_j-x_k|$), and the frame bounds for the basis as in \eqref{EQframe}.  Since the lower frame bound $A$ can degenerate as the minimal distance between the points shrinks, we cannot hope to have a uniform constant for all $\widetilde{X}$.  However, for a class of small perturbations of the integers, the constant can be uniform.  Indeed, combining \eqref{EQscatteredapprox} with Proposition \ref{PROPPerturbedFrameBounds} yields the following.

\begin{theorem}\label{THM6.6}
Let $L<\ln(2)/\pi$.  There is a constant $C$ such that for any $\widetilde{X}$ with $\sup_{k\in\Z}|\widetilde{x_k}-k|<L$, 
$$\|I^{N^{-1}\widetilde{X}}f-f\|_{L_2}\leq CN^{-k}|f|_{W_2^k},$$ for every $f\in W_2^k(\R)$ which is supported on a closed interval.
\end{theorem}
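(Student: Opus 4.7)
The plan is to bootstrap the triangle-inequality bound \eqref{EQscatteredapprox} into a uniform estimate by controlling the constant $C_{\widetilde{X}}$ solely in terms of $L$. As a first step, for any $\widetilde{X}$ with $\sup_k|\widetilde{x_k}-k|<L$, note that $\widetilde{X}$ is a CIS for $PW_\pi$: since $L<\ln(2)/\pi<1/4$, this is already immediate from Kadec's Theorem~\ref{THMKadecs}. Consequently the derivation of \eqref{EQscatteredapprox} goes through verbatim (the particular construction with $\widetilde{x_k}=k$ outside $[-N,N]$ used in the text is not essential; only the CIS property of $\widetilde{X}$ is), and yields
$$\|I^{N^{-1}\widetilde{X}}f-f\|_{L_2}\leq (C_{\widetilde{X}}+2C_{\Z})N^{-k}|f|_{W_2^k}.$$
The sole remaining task is to bound $C_{\widetilde{X}}$ uniformly over the admissible family.

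Second, I would trace the dependence of $C_{\widetilde{X}}$ back through the proofs of Theorems~\ref{THMmeshApproximationRate}, \ref{THMinterpSbyB}, and \ref{THMuniformhbound}. As hinted at in the paragraph preceding the theorem, $C_{\widetilde{X}}$ enters through two quantities only: (i) the Riesz-basis frame bounds $A,B$ of $(e^{-i\widetilde{x_k}(\cdot)})_{k\in\Z}$ in $L_2[-\pi,\pi]$, which control the norms of the analysis/synthesis operators of the basis, and (ii) the minimum separation $\inf_{j\neq k}|\widetilde{x_j}-\widetilde{x_k}|$, which controls various stability constants tied to the interpolation.

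Third, I would apply Proposition~\ref{PROPPerturbedFrameBounds} to the unperturbed system $\Z$. The exponential basis $(e^{-ik(\cdot)})_{k\in\Z}$ is (up to normalization) an orthonormal basis of $L_2[-\pi,\pi]$, so by Parseval the frame bounds in \eqref{EQframe} satisfy $A=B=1$. The threshold $\pi^{-1}\ln(\sqrt{A/B}+1)=\pi^{-1}\ln 2$ therefore matches the standing hypothesis, giving $C=(e^{\pi L}-1)^2<1$ and uniform positive frame bounds $(1\pm\sqrt{C})^2$ for every admissible $\widetilde{X}$. Simultaneously, the triangle inequality
$$|\widetilde{x_j}-\widetilde{x_k}|\geq |j-k|-2L\geq 1-2L>0$$
(using $L<\ln(2)/\pi<1/2$) gives a uniform lower bound on the separation. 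Plugging these uniform quantities in place of $C_{\widetilde{X}}$ yields the desired $C=C(L,k)$.

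The main obstacle is the bookkeeping in step two: one must verify that the constants produced by Theorems~\ref{THMmeshApproximationRate}--\ref{THMuniformhbound} actually depend on $\widetilde{X}$ only through the frame bounds and the separation, and not through finer, $\widetilde{X}$-specific features of the node distribution. Once this dependence is isolated, steps one and three combine to produce the uniform constant asserted in the theorem.
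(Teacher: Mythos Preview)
Your proposal is correct and follows exactly the route the paper takes: the paper's ``proof'' is the one-line remark that combining \eqref{EQscatteredapprox} with Proposition~\ref{PROPPerturbedFrameBounds} yields the theorem, after noting in the preceding paragraph that $C_{\widetilde{X}}$ depends only on the minimum spacing and the frame bounds. You have simply spelled out the details the paper leaves implicit---in particular, that $A=B=1$ for $\Z$ is what produces the threshold $\ln(2)/\pi$, and the explicit separation bound $1-2L$---together with the honest caveat about the bookkeeping needed to verify the form of the dependence of $C_{\widetilde{X}}$.
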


This theorem and the estimate in \eqref{EQscatteredapprox} imply that if one wishes to approximate a compactly supported $f$ by its interpolant $I^{N^{-1}\widetilde{X}}f$, it suffices to consider the more simple uniform interpolant $I^{N^{-1}\Z}f$ up to the penalty of a possibly larger constant $C$.  The usefulness of this will be discussed further in the next section.

\section{Computational Feasibility}\label{SECComputations}

In this section, we investigate the computational feasibility, peculiarities, and potential advantages of the interpolation method based on cardinal functions compared to traditional RBF theory.
Again, for ease of presentation we limit our discussion to problems in one dimension. 
As discussed in Section \ref{SECRates}, consider a function $f\in W_2^k(\R)$ whose support lies inside $[-1,1]$, and we interpolate at a sequence of points $(x_j)_{j=-N}^N\subset[-1,1]$.  The classical method using RBFs is to do interpolate from the linear span of $\{\phi_{\alpha,c}(\cdot-x_j):j=-N,\cdots,N\}$.  The drawback in this case if $\phi$ is a multiquadric or the Gaussian kernel is that forming the interpolant can be computationally quite expensive as it is formed by inverting the matrix $\mathcal{M}_N:=(\phi_{\alpha,c}(x_k-x_j))_{k,j=-N}^N$ and applying it to the data vector $y_j = f(x_j)$ to determine the coefficients of the interpolant. Part of the problem is that if the minimum spacing of the points is $h$, then the condition number for this matrix can be as bad as $e^{1/h^2}$ (\cite{NarcSivaWard}), which is undesirable. The other disadvantage of this framework is that it is not robust to noise. 
RBF interpolation is very good at recovering smooth functions, but is sensitive to noise unless other smoothing techniques are applied.

On the other hand, given a sequence $(x_j)_{j=-N}^N\subset[-1,1]$, by \eqref{EQscatteredapprox} and Theorem \ref{THM6.6}, we may simply use the uniform interpolant $I^{N^{-1}\Z}f$ to approximate $f$. 
The benefit of this, is that $I^{N^{-1}\Z}f$ is less difficult to compute. 
Indeed, one must first estimate the function $\widehat{L_{\alpha,c}}$  by truncating the series in the denominator of \eqref{EQFundamentalFT},  then evaluate $L_{\alpha,c}$ via the Fast Fourier Transform (FFT). 
Then one directly forms the series in \eqref{EQcardinaldef} from the already known sample values $f(j/N)$.  Moreover, as discussed in both of the previous two sections, this method enjoys the advantage of being robust to noise.  Notice however, that this interpolation scheme is different in the sense that $I^{N^{-1}\Z}f$ is in the span of $(L_{\alpha,c}(\cdot-j))_{j=-N}^N$, which in turn is in the span of $(\phi_{\alpha,c}(\cdot-j))_{j\in\Z}$, as opposed to only the span of $2N+1$ translates of the multiquadric.

\subsection{Approximation of the Fourier transform of the cardinal function}
As stated above, one first needs to truncate the series in the denominator of the Fourier transform of the cardinal function. It is known (see for instance~\cite[Theorem 8.15]{Wendland}) that the Fourier transform of a multiquadric is given (in one dimension) by 
\begin{equation}
\label{eq:FourierMultiquadric}
\widehat{\phi_{\alpha,c}}(\xi) = \sqrt{2\pi}\frac{2^{1+\alpha}}{\Gamma(-\alpha)}\left( \frac{c}{|\xi|} \right)^{\alpha+1/2}K_{\alpha+1/2}(c|\xi|), \quad \xi \in \R \backslash \{0 \},
\end{equation}
where $K_\nu(r) := \int_{0}^{\infty} e^{-r\cosh(t)}\cosh(\nu t) \textrm{d}t, \quad r > 0, \nu \in \R$ is the modified Bessel function of the second kind. Note that these Bessel functions have a pole at the origin and decay exponentially. 

It follows that the truncation of the series in the Fourier transform of the cardinal function associated with the general multiquadrics is possible thanks to the fast decay of the Bessel function.  In particular, we have the following.

\begin{theorem}
\label{thm:truncation}
Let $\varepsilon > 0$. Let $\alpha \in \R$ and $\alpha < 0$. For any $c >0$, there exists a natural number $\tau := \tau_{c,\alpha,\varepsilon} \in \N$, such that for all $\xi \in \R$, there exists a $k_\xi \in \Z$ with 
\begin{equation}
\label{eq:truncation}
\left| \zsum{k} \widehat{\phi_{\alpha,c}}(\xi + 2\pi k) - \sum_{k = k_\xi - \tau}^{k_\xi + \tau} \widehat{\phi_{\alpha,c}}(\xi + 2\pi k) \right| \leq \varepsilon \left| \zsum{k} \widehat{\phi_{\alpha,c}}(\xi + 2\pi k)\right|.
\end{equation} 
\end{theorem}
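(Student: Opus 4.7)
The plan is to exploit the $2\pi$-periodicity of the series
\[
S(\xi) := \sum_{k\in\Z} \widehat{\phi_{\alpha,c}}(\xi+2\pi k)
\]
together with the positivity, monotonicity, and exponential decay of $\widehat{\phi_{\alpha,c}}$, all of which follow from \eqref{eq:FourierMultiquadric}. First I would observe that $S$ is $2\pi$-periodic in $\xi$, and I would define $k_\xi$ to be the (a.e.\ unique) integer with $\xi+2\pi k_\xi \in [-\pi,\pi]$; with this choice, the ``central'' term $\widehat{\phi_{\alpha,c}}(\xi+2\pi k_\xi)$ is the shift of smallest modulus, and all the remaining shifts $\xi+2\pi k$ with $|k-k_\xi|>\tau$ have modulus at least $2\pi\tau + \pi$.

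Next I would secure a strictly positive lower bound on $S(\xi)$ that is uniform in $\xi$. Since $\alpha<0$ gives $\Gamma(-\alpha)>0$ and since $K_\nu(r)>0$ for every real $\nu$ and $r>0$, formula \eqref{eq:FourierMultiquadric} shows that every term of $S(\xi)$ is strictly positive, so $S(\xi) \geq \widehat{\phi_{\alpha,c}}(\xi+2\pi k_\xi)$. Using the standard Bessel identity $\frac{d}{dr}[r^\mu K_\mu(r)] = -r^\mu K_{\mu-1}(r)$ applied with $\mu = -(\alpha+1/2)$ after invoking the reflection $K_\nu = K_{-\nu}$, one checks that $\widehat{\phi_{\alpha,c}}$ is strictly decreasing in $|\xi|$ on $(0,\infty)$, and therefore
\[
S(\xi) \;\geq\; \widehat{\phi_{\alpha,c}}(\xi+2\pi k_\xi) \;\geq\; \widehat{\phi_{\alpha,c}}(\pi) \;=:\; m \;>\; 0.
\]

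For the tail, write $T_\tau(\xi) := \sum_{|k-k_\xi|>\tau}\widehat{\phi_{\alpha,c}}(\xi+2\pi k)$. For each $k$ in this range we have $|\xi+2\pi k| \geq 2\pi|k-k_\xi| - \pi$, so once $\tau$ is large enough that all such shifts lie beyond the $c$-dependent threshold where the large-argument asymptotic $K_\nu(r) \leq C_\nu r^{-1/2}e^{-r}$ kicks in, formula \eqref{eq:FourierMultiquadric} yields
\[
\widehat{\phi_{\alpha,c}}(\xi+2\pi k) \;\leq\; C_{\alpha,c}\,|\xi+2\pi k|^{-\alpha-1}\,e^{-c|\xi+2\pi k|}.
\]
Summing this bound against the minimal-distance estimate produces a convergent series of the form
\[
T_\tau(\xi) \;\leq\; C'_{\alpha,c}\sum_{j>\tau}(2\pi j-\pi)^{-\alpha-1}\,e^{-c(2\pi j-\pi)},
\]
whose value tends to zero as $\tau\to\infty$ uniformly in $\xi$. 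Choosing $\tau = \tau_{c,\alpha,\varepsilon}$ so that this uniform bound is at most $\varepsilon m$ then gives $T_\tau(\xi) \leq \varepsilon m \leq \varepsilon\,S(\xi)$, which is exactly \eqref{eq:truncation}.

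The main obstacle is producing the $\xi$-independent positive lower bound $m$ on $S(\xi)$, since this is what allows $\tau$ to be chosen independently of $\xi$. That step rests on the monotonicity of $\widehat{\phi_{\alpha,c}}$, which in turn rests on the Bessel identity above combined with $K_\nu = K_{-\nu}$. Once this is in hand, the tail estimate reduces to the routine exponential decay of $K_\nu$.
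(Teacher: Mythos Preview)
Your proposal is correct and follows essentially the same architecture as the paper's proof: reduce via $2\pi$-periodicity to $\xi^*\in(-\pi,\pi)$ (equivalently, pick $k_\xi$), control the tail by the exponential decay of $K_\nu$, and divide by a uniform positive lower bound on $S(\xi)$.

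The one substantive difference is in how that lower bound is obtained. The paper cites an external estimate (Proposition~3.2 of \cite{HammLedford}) to assert $S(\xi)\geq D e^{-4\pi c}$ and then solves explicitly for $\tau$ in terms of $\varepsilon$, $c$, and the various constants. Your route is more self-contained: you observe that all summands are positive, drop to the central term, and use the monotonicity of $r\mapsto r^{-(\alpha+1/2)}K_{\alpha+1/2}(r)$ (via the Bessel identity $\frac{d}{dr}[r^\mu K_\mu(r)]=-r^\mu K_{\mu-1}(r)$ with $\mu=-(\alpha+1/2)$ after $K_\nu=K_{-\nu}$) to conclude $S(\xi)\geq \widehat{\phi_{\alpha,c}}(\pi)>0$. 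This is cleaner and avoids an external reference; the paper's version, on the other hand, tracks constants more explicitly and produces a concrete threshold for $\tau$, which is the point of the surrounding section on computational feasibility.
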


Before we prove this result, let us remark that \eqref{eq:truncation} is sufficient for a precision within $\varepsilon$ of the truncated version of the Fourier transform of the cardinal function. 
To see this, let us denote $S_{\alpha,c}(\xi) := \zsum{k} \widehat{\phi_{\alpha,c}}(\xi + 2\pi k)$ and $S_{\alpha,c}^{\tau}(\xi) := \sum_{-\tau \leq k \leq \tau} \widehat{\phi_{\alpha,c}}(\xi + 2\pi k)$. 
Then $\left|\frac{\widehat{\phi}(\xi)}{S_{\alpha,c}(\xi)}- \frac{\widehat{\phi}(\xi)}{S_{\alpha,c}^{\tau}(\xi)}\right|  = \dfrac{|S_{\alpha,c}^{\tau}(\xi)-S_{\alpha,c}(\xi)|}{|S_{\alpha,c}(\xi)|\cdot |S_{\alpha,c}^{\tau}(\xi)|}|\widehat{\phi}(\xi)|$.
Together with \eqref{eq:truncation}, it follows $\left|\frac{\widehat{\phi}(\xi)}{S_{\alpha,c}(\xi)}- \frac{\widehat{\phi}(\xi)}{S_{\alpha,c}^{\tau}(\xi)}\right| \leq \varepsilon \frac{\widehat{\phi}(\xi)}{S_{\alpha,c}^{\tau}(\xi)}$. 
All terms in $S_{\alpha,c}^{\tau}(\xi)$ are positive and since $\widehat{\phi}(\xi)$ is included as the case $k = 0$, it follows that $\left|\frac{\widehat{\phi}(\xi)}{S_{\alpha,c}(\xi)}- \frac{\widehat{\phi}(\xi)}{S_{\alpha,c}^{\tau}(\xi)}\right| \leq \varepsilon$.

\begin{proof}
First note that $S_{\alpha,c}(\xi)$ (defined above) is $2\pi$-periodic.
It is straightforward to see that \eqref{eq:truncation} is equivalent to finding $\tau$ such that for any $\xi^* \in (-\pi,\pi)$
$\left| \zsum{k} \widehat{\phi_{\alpha,c}}(\xi^* + 2\pi k) - \dsum_{k = - \tau}^{\tau} \widehat{\phi_{\alpha,c}}(\xi^* + 2\pi k) \right| \leq \varepsilon \left| \zsum{k} \widehat{\phi_{\alpha,c}}(\xi^* + 2\pi k)\right|$.

From~\cite[Lemma 5.13]{Wendland} it follows that for $\nu \in \R$, $0\leq K_\nu(r) \leq \sqrt{2\pi}r^{-1/2}e^{-r}e^{\nu^2/(2r)}$. 
In particular, let $r_k := \xi^* + 2\pi k > 0$, for some $k > \tau$; it follows, with $\nu = \alpha + 1/2$ that
\begin{equation}
\widehat{\phi_{\alpha,c}}(r_k) \leq \frac{2^{1+\alpha}}{\Gamma(-\alpha)}c^{\alpha}2\pi r_k^{-\alpha-1}e^{-cr_k}e^{\frac{(\alpha+1/2)^2}{2cr_k}}.\nonumber
\end{equation}
With $\lambda := \frac{2^{1+\alpha}}{\Gamma(-\alpha)}c^{\alpha}2\pi,$ this expression simplifies to 
$\widehat{\phi_{\alpha,c}}(r_k) \leq \lambda r_k^{-\alpha-1}e^{-cr_k}e^{\frac{(\alpha+1/2)^2}{2cr_k}}$. 
For $k$ large enough, there exists a constant $\gamma > 0$ such that 
\begin{equation}
\label{eq:expDecayPhi}
\widehat{\phi_{\alpha,c}}(r_k) \leq \gamma e^{-cr_k}.
\end{equation}
Plugging back in the definition of $r_k$ yields 
\begin{equation}
\widehat{\phi_{\alpha,c}}(r_k) \leq \gamma e^{-c\xi^*}e^{-2\pi c k}, \quad \text{ for any } k > \tau.\nonumber
\end{equation}
Similarly, given $k < -\tau$, we arrive at the following estimate:
\begin{equation}
\widehat{\phi_{\alpha,c}}(r_k) \leq \gamma e^{c\xi^*}e^{2\pi c k}, \quad \text{ for any } k < -\tau.\nonumber
\end{equation}
Summing for all $k$ outside of $\{-\tau, \cdots, \tau\}$ finally yields, for $\xi \in \R$,
\begin{align}
\bigg| \zsum{k} \widehat{\phi_{\alpha,c}}(\xi + 2\pi k) &- \sum_{k = k_\xi - \tau}^{k_\xi + \tau} \widehat{\phi_{\alpha,c}}(\xi + 2\pi k) \bigg| = \sum_{k < -\tau} \widehat{\phi_{\alpha,c}}(\xi^* + 2\pi k) + \sum_{k > \tau} \widehat{\phi_{\alpha,c}}(\xi^* + 2\pi k) \nonumber\\
&\leq \sum_{k < -\tau} \gamma e^{c\xi^*}e^{2\pi c k} + \sum_{k > \tau} \gamma e^{-c\xi^*}e^{-2\pi c k} = \left(e^{c\xi^*} + e^{-c\xi^*}\right)\gamma \frac{e^{-2\pi c(\tau+1)}}{1-e^{-2\pi c}} \nonumber\\ 
&\leq \frac{2\gamma \operatorname{cosh}(c\xi^*)}{1-e^{-2\pi c}}e^{-2\pi c (\tau+1)}.\nonumber
\end{align}

The sum for $k \in \Z$ can be estimated from below (see for instance~\cite[Proof of Prop.3.2]{HammLedford}) by 
\begin{equation}
\zsum{k} \widehat{\phi_{\alpha,c}}(\xi) \geq De^{-4\pi c},\nonumber
\end{equation}
for a certain constant $D := D_{\alpha,c} > 0$. 
Therefore, for \eqref{eq:truncation} to be valid, it suffices to find $\tau$ such that 
\begin{equation}
\dfrac{2\gamma\operatorname{cosh}(c\xi^*)}{1-e^{-2\pi c}}e^{-2\pi c(\tau+1)}\leq\varepsilon De^{-4\pi c},\nonumber
\end{equation}
which is achieved whenever 
\begin{equation}
\tau \geq 1 + \frac{\ln(\varepsilon^{-1})}{2\pi c} + \frac{\ln\left( 2\gamma \operatorname{cosh}(c\pi) \right)}{2\pi c} - \frac{\ln\left( D(1-e^{-2\pi c}) \right)}{2\pi c}.
\end{equation}
\end{proof}

\begin{remark}\label{Remark:D}
A careful analysis of the proof of Proposition 3.2 from~\cite{HammLedford} gives insight on how to pick $D$.
For instance, for $0 > \alpha \geq -1$, one can choose $D \leq \beta\frac{2^{1+\alpha}}{\Gamma(-\alpha)}c^{\alpha}(2\pi)^{-\alpha-1}e^{-2\pi c}$, where $\beta := \beta_{\alpha}$ is given in~\cite[Corollary 5.12]{Wendland}.
\end{remark}

\begin{remark}
The constant $\gamma$ appearing in \eqref{eq:expDecayPhi} can be easily picked in some particular cases. 
For instance, for $\alpha = -1$, then $\gamma := \frac{\sqrt{2\pi}}{c}e^{\frac{1}{16 c \pi}}$. In this case, Theorem~\ref{thm:truncation} is satisfied for 
$\tau \geq 1 + \frac{1}{32 \pi^2 c^2} + \frac{\ln(\varepsilon^{-1})}{2\pi c} + \frac{\ln\left( \frac{2\sqrt{2\pi}}{c} \operatorname{cosh}(c\pi)\right)}{2\pi c} - \frac{\ln\left((1-e^{-2\pi c})D\right)}{2\pi c}$
\end{remark}

\subsection{Particular cases of Theorem~\ref{thm:truncation}}

\paragraph{Poisson kernel: case $\alpha = -1$} 
This case is associated to the approximation using a Poisson kernel as the basis function. 
The Bessel function involved can be simplified to $K_{-1/2}(r) = \sqrt{\frac{\pi}{2r}}e^{-r}$. 
Carrying out a similar analysis as in the proof of Theorem~\ref{thm:truncation} with $r_k = \xi^*+2\pi k \neq 0$, yields
\begin{align*}
\widehat{\phi_{-1,c}}(r_k) &= \frac{\pi}{c} e^{-c|r_k|},  \\
  &\leq \begin{cases}\frac{\pi}{c}e^{-c\xi^*}e^{-2\pi c k}, \quad \text{for } k > \tau  \\
   \frac{\pi}{c}e^{c\xi^*}e^{-2\pi c k}, \quad \text{for } k < -\tau.
  \end{cases}\end{align*}
Hence, the same argument as in the proof of Theorem \ref{thm:truncation} implies that $$
\sum_{|k| > \tau}\widehat{\phi_{1,c}}(\xi^*+2\pi k) \leq \frac{2\pi}{c}\frac{e^{-2\pi c (\tau +1)}}{1-e^{-2\pi c}}\operatorname{cosh}(c\xi^*).
$$
We want now to ensure the condition 
$\frac{2\pi}{c}\frac{e^{-2\pi c (\tau +1)}}{1-e^{-2\pi c}}\operatorname{cosh}(c\xi^*) \leq \varepsilon D e^{-4\pi c}$ where $D$ is given by Remark \ref{Remark:D} as follows:
\begin{equation}
D_{-1} = \frac{1}{2c\sqrt{2}}.
\end{equation}
Finally, putting everything together, we obtain
\begin{equation}
\tau \geq \frac{\ln(\varepsilon^{-1})}{2\pi c} + 1 + \frac{\ln\left(\frac{4\pi \sqrt{2}\operatorname{cosh}(c\xi^*)}{1-e^{-2\pi c}}\right) }{2\pi c}
\end{equation}
ensures a relative error of the truncated series within $\varepsilon$, for any $\varepsilon > 0$. 
As an example, let us consider the accuracy of a single precision machine $\varepsilon = 10^{-16}$ and a shape parameter $c = 1$. 
In this case, $\tau \geq 7.712$
is sufficient (noting that $|\xi^*|< \pi$). 
In other words, only $17$
evaluations are required for an accurate estimation of the Fourier transform of the cardinal function. 
Similarly, for a double precision machine with $\varepsilon = 10^{-32}$, only $27$ coefficients in the expansion are sufficient for the accurate estimation of the Fourier transform.

\paragraph{Gaussian case:}
As mentioned before, every result stated here holds when the Gaussian kernel, $g_\lambda:=e^{-\lambda|\cdot|^2}$, is used.  Typically, one considers $0<\lambda\leq1$, and the limiting results above hold for $\lambda\to0^+$.  For simplicity, we omit the calculations for the Gaussian as they are rather similar to the Poisson kernel.  One finds that to obtain relative error $\eps$, one needs $\tau\geq \frac{2}{\pi^2}|\ln(\varepsilon/4)|+4$ terms, whence for machine precision, 12 terms are sufficient.

\begin{remark}
In the end, starting with \eqref{EQcardinaldef}, we are left with the numerical approximation of the cardinal function at a given point $x \in [-N,N]$. 
This can be done by first evaluating its (approximate) Fourier transform $\widehat{L_{\alpha,c}}(\xi)$ at the sampling points $\xi_k = k/(2N+1)$, for $-N \leq k \leq N.$ 
Following Theorem~\ref{thm:truncation}, it suffices to evaluate the Bessel functions at only few sampling points and sum them together. 
Finally, a direct application of a Discrete Fourier Transform allows for the computation of the cardinal function $L_{\alpha,c}$ at some (uniform) sampling points $(x_j)_{j \in \mathcal{J}}$ for some uniform finite set $\mathcal{J} \subset [-N,N]$ via 
\begin{equation*}
    x(k) = \sum_{n = - N}^N \widehat{L_{\alpha,c}}(\xi_n)e^{2\pi i kn/|\mathcal{J}|}.
\end{equation*}
Note that with the use of Fast Fourier Transform algorithms, this can be achieved with a total complexity of $\mathcal{O}(N\log(N))$. 
For a sampling set $\mathcal{J}$ large enough -- in other words, which samples the Fourier transform of the cardinal function $\widehat{L_{\alpha,c}}$ with enough detail -- we are able to build an accurate estimation of $L_{\alpha,c}$ at uniform points. 
Then one can interpolate to approximate the points in the expansion \eqref{EQcardinaldef} that have not been calculated exactly via the FFT. 
\end{remark}

\section{Remarks and Extensions}\label{SECExtensions}

We now conclude with some remarks on the previous results including comments on extensions to other kernels and higher dimensions and some considerations for future work.

\begin{remark}
There are some known convergence phenomena and approximation results for multivariate bandlimited and Sobolev interpolation in the vein above.  Specifically, \cite{BSS,HammZonotopes} contain higher dimensional analogues of Theorem \ref{THMScatteredRecovery}, while the uniform results in higher dimensions may be found throughout the work of Riemenschneider and Sivakumar.  Similarly, extensions to Theorem \ref{THMmeshApproximationRate} are discussed in \cite{HammJAT,schsiv}, though these are for specific CIS in higher dimensions which are Cartesian products of univariate ones.  Unfortunately, the multivariate case of  Theorem \ref{THM6.6} will likely prove more difficult for the reason that finding CISs in higher dimensions remains elusive even for simple domains.
\end{remark}

\begin{remark}
As mentioned often before, the use of the multiquadrics here exhibits a particular case of a much more general phenomenon. All of the results here will hold for Gaussians, $g_\lambda:=e^{-\lambda|\cdot|^2}$, $\lambda>0$, where in the limit one takes $\lambda\to0^+$.  In addition, the kernels $g_{\lambda,p}:=(e^{-\lambda|\cdot|^p})^\vee$, $\lambda>0$, $p>0$ will allow for recovery as $\lambda\to\infty$.  For sufficient conditions for a family of kernels governed by a parameter to allow for recovery in the sense of Theorem \ref{THMScatteredRecovery}, see \cite{LedfordScattered}, while \cite{LedfordCardinal} gives conditions for cardinal interpolation at the integer lattice.
\end{remark}

\begin{remark}
Section \ref{SECComputations} is meant to give some guideline for the use of cardinal functions in the sampling series; however, it is not meant to give a comprehensive treatment to computations with these cardinal functions.  Indeed, this is an avenue for further consideration because at the moment, it is not clear if the benefit to summability that replacing sinc with $\widehat{L_{\alpha,c}}$ gives is not offset by the extra computation required to evaluate $L_{\alpha,c}$.  Additionally, while we have argued theoretically that the use of cardinal functions for interpolation of time-limited Sobolev functions should have advantage over the naive RBF interpolation approach, more needs to be determined to demonstrate computational benefit.  In particular, an estimate of the constant $C$ from Theorem \ref{THM6.6} is needed, as well as some guarantees on estimating $L_{\alpha,c}$ at uniform points on $[-N,N]$.  Let us also point out that there are other ways around the poor condition number of the multiquadric and Guassian matrices involved in finding the interpolant for a compact domain; for a summary of some such methods, see Section 2 of \cite{HammLedford}, and additionally consult \cite{BuhmannBook,Wendland}.  \end{remark}


\begin{thebibliography}{1}

\bibitem{Bailey1}
B. A. Bailey, An asymptotic equivalence between two frame perturbation theorems, in {\em Proceedings of Approximation Theory XIII: San Antonio 2010}, Eds: M. Neamtu, L. Schumaker, Springer New York (2012), 1-7.

\bibitem{Bailey2}
B. A. Bailey, Sampling and recovery of multidimensional bandlimited functions via frames, {\em J. Math. Anal. Appl.} \textbf{370} (2010), 374-388. 

\bibitem{BSS} 
B. A. Bailey, Th. Schlumprecht, and N. Sivakumar, Nonuniform sampling and recovery of multidimensional bandlimited functions by Gaussian radial-basis functions, {\em J. Fourier Anal. Appl.} \textbf{17}(3) (2011), 519-533.

\bibitem{Baxter}
B. J. C. Baxter, The asymptotic cardinal function of the multiquadratic
$\phi(r)=(r^2+ c^2)^\frac{1}{2}$ as $c\to\infty$, {\em Comput. Math. Appl.}, \textbf{24}(12), (1992), 1-6.

\bibitem{Buhmann}
M. Buhmann, Multivariate cardinal interpolation with radial-basis functions,
{\em Constr. Approx.} \textbf{6.3} (1990), 225-255.

\bibitem{BuhmannBook} M. D. Buhmann, {\em Radial basis functions: theory and implementations}, Vol. 12. Cambridge University Press, 2003.

\bibitem{DuffinSchaeffer} R. J. Duffin and A. C. Schaeffer, A class of nonharmonic Fourier series, {\em Trans. Amer. Math. Soc.} \textbf{72}(2) (1952), 341-366.


\bibitem{HammJAT} K. Hamm, Approximation rates for interpolation of Sobolev functions via Gaussians and allied functions, {\em J. Approx. Theory}, \textbf{189} (2015), 101-122.

\bibitem{HammSampTA} K. Hamm, Sampling and Recovery Using Multiquadrics, 11th International Conference on Sampling Theory and Applications (SampTA 2015), Washington D.C.

\bibitem{HammZonotopes} K. Hamm, Nonuniform sampling and recovery of bandlimited functions in higher dimensions, To appear, arXiv: 1411.5610. 

\bibitem{HammLedford} K. Hamm and J. Ledford, Cardinal interpolation with general multiquadrics, to appear in {\em Adv. Comput. Math.}

\bibitem{HammLedford2} K. Hamm and J. Ledford, Cardinal interpolation with general multiquadrics: convergence rates, preprint, arXiv: 1506.07387.

\bibitem{HMNW} T. Hangelbroek, W. Madych, F. Narcowich and J. Ward, Cardinal interpolation with Gaussian kernels, {\em J. Fourier Anal. Appl.} \textbf{18} (2012), 67-86.


\bibitem{Kadec} M. I. Kadec, The exact value of the Paley-Wiener constant,  {\em Dokl. Adad. Nauk SSSR} \textbf{155} (1964), 1243-1254.


\bibitem{LedfordScattered} J. Ledford, Recovery of Paley-Wiener functions using scattered translates of regular interpolators, {\em J. Approx. Theory} \textbf{173} (2013), 1-13.

\bibitem{LedfordCardinal}
J. Ledford, On the convergence of regular families of cardinal interpolators,
{\em Adv. Comput. Math.} {\bf 41} (2015), no. 2, 357-371.


\bibitem{NarcSivaWard} F. J. Narcowich, N. Sivakumar, and J. D. Ward, On condition numbers associated with radial-function interpolation, {\em J. Math. Anal. Appl.} \textbf{186}(2) (1994), 457-485.



\bibitem{PakShin} H. Pak and C. Shin, Perturbation of nonharmonic Fourier series and nonuniform sampling theorem, {\em Bull. Korean Math. Soc.} \textbf{44} (2007), 351-358.

\bibitem{Pavlov} B. S. Pavlov, The basis property of a system of exponentials and the condition of Muckenhoupt, {\em Dokl. Akad. Nauk SSSR} \textbf{247} (1979), 37-40.

\bibitem{RS1} S. D. Riemenschneider and N. Sivakumar, On the
cardinal-interpolation operator associated with the one-dimensional
multiquadric, {\em East J. Approx.} \textbf{7}, no. 4 (1999), 485-514.



\bibitem{RS4} S. D. Riemenschneider and N. Sivakumar, Cardinal interpolation by Gaussian functions: A survey, {\em J. Analysis} \textbf{8} (2000), 157-178.

\bibitem{schsiv} Th. Schlumprecht  and N. Sivakumar, On the sampling and recovery of bandlimited functions via scattered translates of the Gaussian,  {\em J. Approx. Theory} \textbf{159} (2009), 128-153.

\bibitem{Schoenberg} I. J. Schoenberg, {\em Cardinal Spline Interpolation},  Vol. 12. Society for Industrial and Applied Mathematics, Philadelphia, 1973.

\bibitem{Shannon} C. E. Shannon, Communication in the presence of noise, {\em Proc. IRE}, {\bf 37} (1949), 10-21.

\bibitem{Siva} N. Sivakumar, A note on the Gaussian cardinal-interpolation operator, {\em Proc. Edinb. Math. Soc. (2)} \textbf{40} (1997), 137-150.

\bibitem{SunZhou} W. Sun, X. Zhou, On the stability of multivariate trigonometric systems, {\em J. Math. Anal. Appl.} \textbf{235} (1999), 159-167.

\bibitem{Wendland} H. Wendland, {\em Scattered Data Approximation}, Cambridge University Press (2004).

\bibitem{Whittaker}  E. T. Whittaker, On the functions which are represented by the expansions of the interpolation theory, {\em Proc. Royal Soc. Edinburgh}, Sec. A, {\bf 35} (1915), 181-194.

\bibitem{Young_Perturb}
R. M. Young, On perturbing bases of complex exponentials in $L^2(-\pi,\pi)$, {\em Proc. Amer. Math. Soc.} {\bf 53} (1975), 137-140.

\bibitem{yo} 	
R. M. Young,  {\em An Introduction to Nonharmonic Fourier Series, Revised Edition}, Academic Press (2001).

\end{thebibliography}
\end{document}